\newcommand{\be}{\begin{equation}}
\newcommand{\ee}{\end{equation}}
\newcommand{\beq}{\begin{eqnarray}}
\newcommand{\eeq}{\end{eqnarray}}
\newtheorem{thm}{Theorem}[section]
\newtheorem{lma}[thm]{Lemma}
\newtheorem{prop}[thm]{Proposition}
\newtheorem{cor}[thm]{Corollary}
\newtheorem{defn}[thm]{Definition}
\theoremstyle{remark}
\newtheorem{rem}[thm]{Remark}
\numberwithin{equation}{section}
\def\be{\begin{equation}}
\def\ee{\end{equation}}
\def\bee{\begin{equation*}}
\def\eee{\end{equation*}}
\def\ol{\overline}
\def\lf{\left}
\def\ri{\right}
\def\K{K\"ahler }
\def\KR{K\"ahler-Ricci }
\def\Ric{\text{\rm Ric}}
\def\Rm{\text{\rm Rm}}
\def\ddb{\sqrt{-1}\partial\bar\partial}
\def\p{\partial}
\def\aint{\frac{\ \ }{\ \ }{\hskip -0.4cm}\int}
\def\ol{\overline}
\def\heat{\lf(\frac{\p}{\p t}-\Delta_{g(t)}\ri)}
\def\tr{\operatorname{tr}}
\def\e{\varepsilon}
\def\a{{\alpha}}
\def\b{{\beta}}
\def\R{\mathbb{R}}
\def\C{\mathbb{C}}
\begin{document}

\title[]
{Expanding Ricci solitons coming out of weakly PIC1 metric cones}

\author{Pak-Yeung Chan}
\address[Pak-Yeung Chan]{Mathematics Institute, Zeeman Building, University of Warwick, Coventry CV4 7AL, UK}
\email{pak-yeung.chan@warwick.ac.uk}

 \author{Man-Chun Lee}
\address[Man-Chun Lee]{Department of Mathematics, The Chinese University of Hong Kong, Shatin, Hong Kong, China}
\email{mclee@math.cuhk.edu.hk}

 \author{Luke T. Peachey}
\address[Luke T. Peachey]{Department of Mathematics, The Chinese University of Hong Kong, Shatin, Hong Kong, China}
\email{lukepeachey@cuhk.edu.hk}


\renewcommand{\subjclassname}{
  \textup{2020} Mathematics Subject Classification}
\subjclass[2020]{Primary 51F30, 53C24}

\date{\today}

\begin{abstract}
Motivated by recent work of Deruelle-Schulze-Simon \cite{DSS2024}, we study complete weakly PIC1 Ricci flows with Euclidean volume growth coming out of metric cones. We show that such a Ricci flow must be an expanding gradient Ricci soliton, and as a consequence, any metric cone at infinity of a complete weakly PIC1 K\"ahler manifold with Euclidean volume growth is biholomorphic to complex Euclidean space in a canonical way.
\end{abstract}

\maketitle

\markboth{Pak-Yeung Chan, Man-Chun Lee, Luke T. Peachey}{Expanding Ricci solitons coming out of weakly PIC1 metric cones}

\section{Introduction}\label{s-introduction}

Metric cones arise naturally as tangent cones of Gromov-Hausdorff limits of sequences of non-collapsing smooth manifolds with uniform Ricci curvature lower bounds due to the celebrated work of Cheeger-Colding \cite{CheegerColding}. 
In this paper, we are interested in studying Ricci flows coming out of metric cones in the following sense:

\begin{defn}
Given a Ricci flow $g(t)$ defined on $M$ for $t \in (0,T)$, we say that $g(t)$ is coming out of a metric cone $C(X)$ if
$$(M,d_{g(t)},x_0) \xrightarrow{t \downarrow 0} (C(X),d_c,o),$$
in the pointed Gromov-Hausdorff sense, where $d_c$ denotes the conical metric, and $o$ the vertex of the cone, for some $x_0 \in M$.
\end{defn}

Since a metric cone is self-similar under dilation, it follows heuristically that a Ricci flow coming out of a metric cone should itself be self-similar. More precisely, given a complete Riemannian manifold $(M,g)$ and a function $f\in C^{\infty}(M)$, the triple $(M,g,\nabla f)$ is said to be an \textit{expanding gradient Ricci soliton} if
\begin{equation}\label{RSe}
2\nabla^{g,2} f - g -2\Ric(g) = 0.  
\end{equation}
An expanding gradient Ricci soliton generates an immortal self-similar solution to the Ricci flow modulo scaling and re-parametrization: since the vector field $\nabla^g f$ is complete \cite{Zhang}, if $\{\phi_t: M \rightarrow M\}_{t>0}$ denotes the flow of the time-dependent vector field $-t^{-1}\nabla^g f$ with $\phi_1 = id_M$, then the family of self-similar metrics $g(t):=t \phi_t^*g$ satisfy the Ricci flow equation. 

Expanding Ricci solitons are a generalisation of Einstein manifolds with negative Einstein constant. Recently, Topping and the third named author completely classified expanding Ricci solitons in dimension two \cite{PeacheyTopping23}.  Since \textit{gradient} expanding Ricci solitons are deemed to be the prototype for Ricci flows coming out of conical singularities (see for instance \cite{GS2018, Lav2023}),  we restrict our attention in this paper to those solitons with soliton vector field the gradient of a smooth function as in \eqref{RSe}.

The resolution of conical singularities by the Ricci flow is related to the singularity analysis of the flow. To formulate a reasonable weak solution to the Ricci flow in higher dimensions, it is of fundamental importance to investigate the properties of Ricci flow through singularities. Indeed, it has been proposed that asymptotically conical expanding solitons could be used to resolve the conical  singularities that appear in 4-dimensional Ricci flows \cite{GS2018,AD2024, BamlerChen2023}.


In recent years, substantial progresses have been made to understand the resolution of conical singularities. Schulze and Simon \cite{SchulzeSimon2013} proved that any tangent cone at infinity of a complete manifold with maximal volume growth and non-negative curvature operator can be smoothed out by an expanding gradient Ricci soliton. The problem has also been extensively studied when the link of the cone $X$ is a smooth closed manifold. Bryant applied ODE methods to construct expanding gradient Ricci solitons on $\R^n$ coming out of rotationally symmetric cones (see also \cite{FIK2023}). When the link $X$ is a simply connected smooth closed manifold with $\Rm(g_X)\ge 1$, Deruelle \cite{Deruelle2016} used a PDE deformation method to construct expanding gradient Ricci solitons with non-negative curvature operator coming out of $C(X)$. Moreover, a uniqueness result was proven within the class of expanding gradient Ricci solitons with $\Rm>0$ asymptotic to $C(X)$ (see also \cite{Siepmann2013, Chodosh2014, ChodoshFong2016, CD2020, CDS2019}). The existence result has recently been generalized in dimension $4$ by Bamler and Chen \cite{BamlerChen2023} using a new degree theory method which only requires $X$ to be a smooth quotient of $\mathbb{S}^3$ with a metric of scalar curvature $\mathcal{R}(g_X)\ge 6$. Angenent and Knopf \cite{AD2024} constructed explicit examples of metric cones in dimensions five and higher which can be smoothed out by an arbitrary finite number of geometrically distinct expanding gradient Ricci solitons. Thus the unique behavior of the Ricci flow through a singularity is not expected in higher dimensions.

In this paper we consider those metric cones which arise as a tangent cone at infinity of a non-collapsed weakly PIC 1 manifold. That is, metric cones $C(X)$ such that $\left(C(X),d_c,o\right)$ is the pointed Gromov-Hausdorff limit of $(M,R_i^{-2}g,x_0)$ for some $R_i\to +\infty$, where $(M,g)$ is a complete manifold with non-negative $1$-isotropic curvature and Euclidean volume growth. Isotropic curvature was first introduced by Micallef and Moore \cite{MicallefMoore1988} to prove the topological sphere theorem, and is one of the most natural curvature quantities to consider along the Ricci flow, its study along the flow dating back to Hamilton \cite{Hamilton1997}. In the compact case, substantial progress in understanding positive isotropic curvature has been made by recently Brendle \cite{BrendleISO}. Some important applications of the Ricci flow theory of isotropic curvature also include the differentiable sphere theorem \cite{BrendleSchoen2008, BrendleSchoen2009}. We refer interested readers to \cite{BrendleBook,PIC1} for an overview on related topics. Let us first recall the notion of PIC1 curvature.


\begin{defn}
An algebraic curvature tensor $R \in Sym^2(\Lambda^2(\mathbb{R}^n))$ is weakly PIC1, denoted by $R \in \mathrm{C}_{PIC1}$, if its $\mathbb{C}$-linear extension
$R \in Sym^2(\Lambda^2(\mathbb{C}^n))$ satisfies $R(\omega,\ol{\omega})\geq 0$ for all simple and isotropic $\omega \in \Lambda^2(\mathbb{C}^n)$.
\end{defn}
Recall, a complex plane $\omega \in \Lambda^2(\mathbb{C}^n)$ is \textit{isotropic} if $\mathcal{I}(\omega,\omega) = 0$, where $\mathcal{I}$ denotes the $\mathbb{C}$-linear extension of the curvature operator with constant sectional curvature $1$ (see \cite{PIC1} for more details). We say that a Riemannian manifold $(M,g)$ has non-negative $1$-isotropic curvature if $\mathrm{Rm}(g)\in  \mathrm{C}_{PIC1}$ on $M$. 
In dimension $3$, non-negative Ricci curvature is equivalent to negative $1$-isotropic curvature. When dimension $\ge 4$, $\Ric(g) \ge 0$ if $\mathrm{Rm}(g)\in  \mathrm{C}_{PIC1}$.

We now state the main result of our paper: any metric cone at infinity of a non-collapsed wealky PIC1 manifold is resolved by an expanding gradient Ricci soliton.

\begin{thm}\label{thm:main-1}
Suppose $(M^n,g_0)$ is a complete non-compact manifold such that $\mathrm{Rm}(g_0)\in \mathrm{C}_{PIC1}$ and $\mathrm{AVR}(g_0)>0$. For any metric cone at infinity $(C(X),d_c,o)$ of $(M,g_0)$, there exists a smooth manifold $M_\infty$, a metric $d_\infty$ on $M_\infty$, a smooth Ricci flow $g(t)$ on $M_\infty$ for $t>0$, and a smooth function $u$ on $M_\infty\times (0,+\infty)$, such that 
\begin{enumerate}
    \item[(a)] $\mathrm{Rm}(g(t))\in \mathrm{C}_{PIC1}$;
    \item[(b)] \label{curvb} $|\Rm(g(t))|\leq \a t^{-1}$ for some $\a>0$;
     \item[(c)] $(M_\infty,d_\infty,x_\infty)$ is isometric to $(C(X),d_c,o)$ as pointed metric spaces;
    \item[(d)] $d_{g(t)} \to d_\infty$ uniformly on $M_\infty$ as $t\downarrow 0$;
    \item[(e)] $u(\cdot,t)\to \frac14 d_\infty^2(x_\infty,\cdot)$ in $C^\b_{loc}(M_\infty)$ as $t \downarrow 0$, for some $\b\in (0,1)$;
    \item[(f)] $2t\,\Ric(g(t))+g(t)-2\nabla^{2,g(t)}u \equiv 0$, for all $t>0$. 
\end{enumerate}
\end{thm}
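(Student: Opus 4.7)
The plan is to produce the expanding soliton as a parabolic blow-down limit of an immortal Ricci flow launched from $g_0$. First, by existence theorems for weakly PIC1 Ricci flow with Euclidean volume growth (for example, Bamler--Cabezas-Rivas--Wilking and Lee--Tam), there is an immortal smooth Ricci flow $g(t)$ on $M$ for $t>0$ with $g(t) \to g_0$ as $t \downarrow 0$, satisfying $|\Rm(g(t))| \le \alpha t^{-1}$ for a uniform constant $\alpha = \alpha(n, \mathrm{AVR}(g_0))$, preserving the cone $\mathrm{C}_{PIC1}$, and preserving the asymptotic volume ratio. Fix a sequence $R_i \to \infty$ realizing $(M, R_i^{-2} g_0, x_0) \to (C(X), d_c, o)$ in the pointed Gromov--Hausdorff sense, and consider the parabolic rescalings
\[
g_i(t) := R_i^{-2}\, g(R_i^2 t), \qquad t>0,
\]
which share the curvature bound, the PIC1 condition, and a uniform non-collapsing constant.

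By Hamilton's compactness theorem, applied in this non-collapsed smooth regime with $|\Rm| \le \alpha/t$, a subsequence of $(M, g_i(t), x_0)$ converges in the pointed Cheeger--Gromov sense to a smooth Ricci flow $(M_\infty, g_\infty(t), x_\infty)$ for $t>0$, which inherits the PIC1 condition and the $\alpha t^{-1}$ curvature bound, establishing (a) and (b). Hamilton--Simon type distance distortion estimates for Ricci flows satisfying $|\Ric| \le C/t$ force $d_{g_\infty(t)} \to d_\infty$ uniformly on $M_\infty$ as $t \downarrow 0$. Combined with the pointed GH convergence of the rescaled initial data, this identifies $(M_\infty, d_\infty, x_\infty)$ with $(C(X), d_c, o)$ as pointed metric spaces, yielding (c) and (d).

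The remaining assertions (e) and (f) come from exhibiting a gradient self-similar structure on the limit flow. My plan is to use the monotonicity of the Feldman--Ilmanen--Ni expander entropy $\mathcal{W}_+$. Since the initial data of $g_\infty(t)$ is the scale-invariant cone $C(X)$, a further parabolic rescaling of $g_\infty$ has the same initial data, so by the uniqueness/compactness part of the construction, $g_\infty(t)$ is invariant under parabolic rescaling up to diffeomorphism; consequently $\mathcal{W}_+(g_\infty(t))$ is constant in $t$. The rigidity case of the $\mathcal{W}_+$-monotonicity formula then produces a smooth potential $u(\cdot,t)$ satisfying the expanding gradient soliton equation
\[
2t\,\Ric(g_\infty(t)) + g_\infty(t) - 2 \nabla^{2, g_\infty(t)} u \equiv 0.
\]
Once this gradient structure is in place, self-similarity together with parabolic regularity and comparison with the model function on the cone force $u(\cdot,t) \to \tfrac{1}{4}\, d_\infty^2(x_\infty,\cdot)$ in $C^\beta_{loc}(M_\infty)$, which gives (e).

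The hard part will be the soliton step. One must verify that $\mathcal{W}_+$ is well-defined, finite, and monotone along the weakly PIC1 flow $g(t)$ even though $g_0$ is smooth and only has Euclidean volume growth (rather than being itself conical), and then rule out entropy loss under the Cheeger--Gromov limit $g_i \to g_\infty$. Because $g(t)$ is not self-similar, the self-similarity of $g_\infty(t)$ cannot be asserted directly; it has to be extracted from the scale invariance of the limiting cone $C(X)$ via a delicate bootstrap combining the pointed GH convergence of the initial data, parabolic regularity for $u$, and the rigidity in the equality case of $\mathcal{W}_+$-monotonicity in the PIC1 Euclidean-volume-growth setting. This monotonicity/rigidity step, rather than the compactness or initial-data convergence, is the central technical obstacle.
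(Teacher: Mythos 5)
Your treatment of (a)--(d) is essentially the paper's: rescale $g_0$ by $R_i^{-2}$, run the PIC1-preserving, non-collapsed Ricci flows with $|\Rm|\leq \alpha t^{-1}$, and pass to a pointed Cheeger--Gromov limit, identifying the $t\downarrow 0$ metric with the cone via the distance-distortion estimates. That part is fine.

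The soliton step, however, contains a gap that is not merely technical but goes to the heart of what the theorem asserts. You deduce self-similarity of $g_\infty(t)$ from the scale-invariance of its initial condition ``by the uniqueness/compactness part of the construction.'' No such uniqueness is available: the initial data is only attained in the pointed Gromov--Hausdorff sense, the curvature blows up like $t^{-1}$, and forward uniqueness theorems (Chen--Zhu) require complete bounded-curvature flows agreeing at a \emph{positive} time. Worse, uniqueness of Ricci flows emerging from a metric cone is known to \emph{fail} in general --- Angenent--Knopf produce cones in dimension $\geq 5$ smoothed by several distinct expanders --- so scale-invariance of the initial cone cannot by itself force the flow to be self-similar. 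Since self-similarity (plus the gradient structure) is precisely the conclusion (f), your argument is circular at this point: the constancy of $\mathcal{W}_+$ is derived from self-similarity, which is derived from nothing. Even setting that aside, the finiteness and monotonicity of the expander entropy, and the rigidity in its equality case, are not established for complete non-compact flows with only $|\Rm|\leq\alpha t^{-1}$ and Euclidean volume growth, and you do not supply them. The paper takes an entirely different route: it builds $u$ as a heat flow whose initial data is the Cheeger--Jiang--Naber regularization of $\tfrac14 d_c^2$, forms the obstruction tensor $S=-2t\Ric-g+2\nabla^2u$, proves that the extremal eigenvalues of $S\owedge g$ satisfy $(\partial_t-\Delta)\varphi\leq\mathcal{R}\varphi$ under the PIC1 hypothesis, and kills $S$ using conjugate heat kernel bounds together with the localized maximum principle of Lee--Tam. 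Some substitute for this mechanism --- a quantitative way of propagating the almost-soliton structure of the initial cone forward in time --- is exactly what your proposal is missing.
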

A more general result holds as long as the initial data of the Ricci flow is a metric cone in the pointed Gromov-Hausdorff sense and satisfies the PIC1 condition along the flow. We refer readers to Theorem~\ref{thm:Rie-case} for the detailed statement. 

{ 
\begin{rem}\label{rmk ot}
It follows from conditions (b) and (f) that the Ricci flow $g(t)$ is self-similar. Indeed, let $\phi_t$ be the flow of $-\nabla^{g(1)}u(\cdot,1)/t$ with $\phi_1=id_M$. As both $g(t)$ and $t\phi_t^*\left[g(1)\right]$ are complete Ricci flows satisfying condition (b) and agreeing at $t=1$, by the forward and backward uniqueness of the Ricci flow \cite{ChenZhu2006,Kotschwar2010}, $g(t)\equiv t\phi_t^*\left[g(1)\right]$, for all $t>0$.
\end{rem}}

In contrast with earlier work \cite{Siepmann2013, Deruelle2016, CD2020}, we do not require $C(X)$ to have isolated singularities. In a similar spirit to \cite{SchulzeSimon2013} our result doesn't make any regularity assumption on the initial cone and hence works for rougher metric cones. On the other hand, we generalize the curvature condition in the earlier works \cite{ChenZhu2003,Brendle2009,SchulzeSimon2013} where the differential Harnack inequality along the Ricci flow plays a crucial role. Indeed, the importance of relaxing the curvature condition from weakly PIC2 to weakly PIC1 is largely motivated by a conjecture of Hamilton-Lott which states that a complete, connected 3-dimensional Riemannian manifold which is uniformly Ricci pinched is either compact or flat. The conjecture was answered affirmatively in full generality by the work of Lott \cite{Lott2024}, Deruelle-Schulze-Simon \cite{DSS2022} and Lee-Topping \cite{LeeTop2022}. To extend the result to higher dimensions, one asks if a weakly PIC1 Ricci flow coming out of a metric cone in the pointed Gromov-Hausdorff sense is close to an expanding Ricci soliton. In \cite{DSS2024}, Deruelle-Schulze-Simon were able to show that under a Reifenberg regularity assumption, the Ricci flow smoothing is asymptotically similar to an expanding Ricci soliton. Thus, they show that a complete PIC1 pinched manifold with maximal volume growth is flat or compact (see also the work of the second named author and Topping \cite{LeeToppingPIC1}). In this regard, our result says that the Ricci flow smoothing is automatically a gradient expanding Ricci soliton generalizing the work of \cite{DSS2024} in the  weakly PIC1 case. Our method is indeed largely motivated by the key observation in \cite{DSS2024} which considered an obstruction tensor $\mathcal{T}=-t\Ric-\frac12 g+\nabla^2 u$ and an obstruction function $v=|\nabla u|^2-u+t^2\mathcal{R}+2t\tr_g \mathcal{T}$,
where $u$ solves $\heat u=-\frac{n}2$ with initial data $\frac14 d_c^2(\cdot,o)$ locally. Here $\mathcal{R}$ denotes the scalar curvature. By combining Ricci flow smoothing estimates and a sharp modification of $d_c^2$ by Cheeger-Jiang-Naber \cite{CJN21}, they managed to control $\mathcal{T}$ and $v$ locally away from the tips. In this work, we are able to estimate up to the tips. This is based on a heat kernel estimate of Bamler-Cabezas-Wilking \cite{BamlerCabezasWilking2019} and a localized maximum principle developed by the second named author and Tam \cite{LeeTam2022}. With the soliton structure on the metric cone at infinity, we give an alternative proof to the main Theorem in \cite{DSS2024} saying that any complete non-compact manifolds with PIC1 pinched and Euclidean volume growth must be isometric to Euclidean space, see Corollary~\ref{cor:pinching}.

Our second motivation comes from a longstanding open question of Yau in complex geometry which asks if a complete non-compact \K manifold $(M,g)$ with positive bisectional curvature is biholomorphic to $\mathbb{C}^n$. This can be regarded as a non-compact version of the Frankel conjecture. It was discovered by Chau-Tam \cite{ChauTamMRL} that the existence of an expanding \KR soliton metric with $\Ric\geq 0$ is sufficient to conclude $M $ is biholomorphic to $\C^n$. This important observation led them to confirm Yau's uniformization conjecture in the maximal volume growth case with bounded curvature by studying the long-time behaviour of the \KR flow with non-negative bisectional curvature \cite{ChauTam2006JDG}. We therefore consider the \KR flow smoothing of the corresponding metric cone under the weakly PIC1 condition and prove the following:

\begin{cor}\label{cor:kahler}
Let $(M,g)$ be a complete non-compact \K manifold with $\mathrm{dim}_\mathbb{C}(M)=n$, Euclidean volume growth and non-negative $1$-isotropic curvature. If $(M_\infty,d_\infty,x_\infty)$ is a metric cone at infinity of $M$, then the metric cone $(M_\infty,d_\infty,x_\infty)$ is a pointed complete metric K\"ahler space which is biholomorphic to $\mathbb{C}^n$.
\end{cor}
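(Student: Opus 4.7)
The plan is to apply Theorem~\ref{thm:main-1} to obtain an expanding gradient Ricci soliton $(M_\infty, g(t), \nabla u)$ smoothing out $(C(X), d_c, o)$, to upgrade this structure to an expanding gradient \KR soliton, and then to invoke the theorem of Chau-Tam \cite{ChauTamMRL} to identify $M_\infty$ biholomorphically with $\C^n$.

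For the upgrade, I would show that $g(t)$ is \K with respect to a limit complex structure $J_\infty$ on $M_\infty$. The Ricci flow in Theorem~\ref{thm:main-1} is extracted as a Cheeger-Gromov-Hamilton limit of flows starting from rescalings $(M, R_i^{-2}g)$, each of which is a \KR flow with parallel complex structure $J$. The curvature bound (b) combined with Shi's higher-order estimates gives uniform $C^\infty_{loc}$ bounds on $|\nabla^k \Rm|$ along these flows; together with $\nabla^{g_i(t)} J = 0$, this provides $C^\infty_{loc}$ control on the pulled-back almost complex structures $J_i = \varphi_i^* J$ under the underlying Cheeger-Gromov diffeomorphisms $\varphi_i$. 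After passing to a subsequence, $J_i \to J_\infty$ in $C^\infty_{loc}$; the limit is parallel with respect to $g(t)$, hence integrable, and $(M_\infty, g(t), J_\infty)$ becomes a \KR flow. Decomposing condition (f) into types then gives
\be
2t R_{i\bar j} + g_{i\bar j} - 2u_{i\bar j} = 0, \qquad u_{ij} = 0,
\ee
in holomorphic coordinates, so that $\nabla u$ is real holomorphic and $(M_\infty, g(t), \nabla u)$ is an expanding gradient \KR soliton.

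Next, the remark recalled in the introduction that weakly PIC1 forces $\Ric \ge 0$ in real dimension $\ge 4$ (the case $n = 1$ being handled by classical $2$D arguments) gives $\Ric(g(t)) \ge 0$ via (a). By (e) and the fact that $\tfrac14 d_\infty^2(x_\infty, \cdot)$ is a proper exhaustion of $C(X) \cong M_\infty$, $u(\cdot, t)$ is itself a smooth proper exhaustion, and by the soliton equation it is strictly plurisubharmonic. Chau-Tam \cite{ChauTamMRL} then yields a biholomorphism $\Phi\colon M_\infty \to \C^n$ constructed from the flow of the holomorphic soliton vector field $\nabla u$. Combined with conditions (c) and (d), this endows $(C(X), d_c, o)\cong (M_\infty, d_\infty, x_\infty)$ with a pointed complete metric \K space structure biholomorphic to $\C^n$.

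The main obstacle is the transport of the complex structure in the first step: while the convergence $J_i \to J_\infty$ in $C^\infty_{loc}$ is plausible by the \K analogue of Hamilton's compactness theorem, the flow of Theorem~\ref{thm:main-1} is produced through a delicate smoothing procedure (using the heat kernel estimates of Bamler-Cabezas-Wilking and the localized maximum principle of Lee-Tam), so one must check that this construction can be carried through in the \K category, or equivalently, that the Cheeger-Gromov diffeomorphisms extracted in its proof can be chosen compatibly with the approximating \K structures. Once this bookkeeping is in place, the remaining steps are routine applications of known results.
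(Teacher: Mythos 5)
Your route is essentially the paper's: the corollary is deduced from the K\"ahler branch of Theorem~\ref{thm:Ricciflow-PIC1}, Cheeger--Colding theory, and the final assertion of Theorem~\ref{thm:Kahler-OB-Ric}, which in turn combines Theorem~\ref{thm:Rie-case} with the observation that on a K\"ahler manifold the $J$-invariance of $g+2t\Ric$ forces the $(2,0)$-part of $\nabla^2 u$ to vanish, and then invokes Chau--Tam, exactly as you propose. The obstacle you flag at the end is not actually an issue: the approximating flows are produced by the K\"ahler existence theorem of Lee--Tam (branch (B)(ii)/(III) of Theorem~\ref{thm:Ricciflow-PIC1}), so they are K\"ahler--Ricci flows from the outset and their parallel complex structures pass to the Cheeger--Gromov--Hamilton limit in the standard way (as the paper notes after the statement of the corollary, citing \cite{LeeTamGT,LottDuke}); the Bamler--Cabezas-Rivas--Wilking heat-kernel estimates and the localized maximum principle enter only in constructing the potential $u$ and killing the obstruction tensor, not in constructing the flow, so no K\"ahler-compatibility of that machinery needs to be verified.
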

The notion of complete metric K\"ahler space  ensures the compatibility between the holomorphic structure and the metric structure, see \cite[Section 5]{LottDuke}. Indeed, by the \KR flow smoothing, if $(M_\infty,d_\infty,x_\infty)$ is a metric cone arising as a limit of $(M,R_i^{-2}g,x_0)$ for some $R_i\to +\infty$, then the complex structure also converges modulus pulling back by diffeomorphism \cite{LeeTamGT,LottDuke} and hence the complex structure on $M_\infty$ is the most natural one induced from $M$. Part of our result in the K\"ahler case only requires weaker curvature conditions, namely non-negative Ricci curvature and non-negative orthogonal bisectional curvature. Recall, a \K manfiold $(M,g)$ has non-negative orthogonal bisectional curvature, denoted by $\mathrm{OB}(g)\geq 0$, if 
$$R(X,\bar X,Y,\bar Y)\geq 0, \quad \forall X,Y\in T^{1,0}(M) \text{  such that  } g(X,\bar Y)=0.$$
We refer readers to Theorem~\ref{thm:Kahler-OB-Ric} for the detailed statement.

\medskip
The paper is organized as follows. In section $2$ we present some preliminary results on the existence of suitable Ricci flows, (conjugate) heat kernel estimates, distance distortion estimates under the Ricci flow and the local maximum principle from \cite{LeeTam2022}. In section $3$, we establish a parabolic differential inequality satisfied weakly by the eigenvalues of the soliton obstruction tensor under non-negative $1$-isotropic curvature condition, with section $4$ dedicated to its \K analogy. Using the regularization of conical distance functions by Cheeger-Jiang-Naber \cite{CJN21}, in section $5$ we solve the global Cauchy problem for the heat flow starting from the square of the conical distance function. This gives rise to a global soliton obstruction tensor associated to the flow. With these analytic preparation, we apply the local maximum principle to show that this obstruction tensor vanishes everywhere and hence the flow is an expanding gradient soliton.

\vskip0.2cm

{\it Acknowledgement:} The authors would like to thank Alix Deruelle, Felix Schulze and Peter Topping for some insightful discussions.  The first named author is supported by EPSRC grant EP/T019824/1. The second named author was partially supported by Hong Kong RGC grant (Early Career Scheme) of Hong Kong No. 24304222, No. 14300623,  and a NSFC grant No. 12222122.


\section{Preliminaries on Ricci flow smoothing} 
In this section, we will collect some known results for Ricci flows starting from complete non-compact manifolds with possibly unbounded curvature, in both the Riemannian and \K cases. We start with the following Ricci flow existence result.
\begin{thm}\label{thm:Ricciflow-PIC1}
If $(M^n,g_0)$ is a complete non-compact manifold such that 
\begin{enumerate}
    \item[(A)] $\mathrm{Vol}_{g_0}\left(B_{g_0}(x,r) \right)\geq vr^{n}>0$, for all $x\in M$ and $r>0$;
    \item[(B)] either one of the following holds 
    \begin{enumerate}
        \item[(i)] $\mathrm{Rm}(g_0)\in \mathrm{C}_{PIC1}$ or;
        \item[(ii)] $g_0$ is \K with $\mathrm{OB}(g_0),\Ric(g_0)\geq 0$,
    \end{enumerate}
\end{enumerate}
then there exists $\a(n,v)>0$ and an immortal solution $g(t)$ to the Ricci flow on $M\times [0,+\infty)$, such that for all $t>0$, we have 
\begin{enumerate}
 \item[(I)] $\sup_M|\Rm(g(t))|\leq \a t^{-1}$;
 \item[(II)] $\mathrm{Rm}(g(t))\in \mathrm{C}_{PIC1}$ if assumption (B)(i) holds;
 \item[(III)] $g(t)$ is \K with $\mathrm{OB}(g(t)),\Ric(g(t))\geq 0$ if assumption (B)(ii) holds;
 \item[(IV)]$\mathrm{Vol}_{g(t)}\left(B_{g(t)}(x,r) \right)\geq v r^n>0$, for all $x\in M$ and $r>0$;
   
\end{enumerate}
Furthermore, there exists $C_0(n,v)>0$ such that the heat kernel $K(x,t;y,s)$ and the conjugate heat kernel \footnote{More precisely, $K$ denotes the kernel with respect to the operator $\partial_t -\Delta_{g(t)}$, while $G$ denotes the kernel corresponding to $\partial_t -\Delta_{g(t)} -\mathcal{R}$.} $G(x,t;y,s)$ satisfy the inequality
$$K(x,t;y,s)\leq G(x,t;y,s)\leq \frac{C_0}{t^{n/2}}\exp\left(-\frac{d_{g(s)}^2(x,y)}{C_0t} \right),$$
for all $0\leq 2s<t<+\infty$, and $x,y\in M$. 
\end{thm}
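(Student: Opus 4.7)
The plan is to assemble this statement from existing Ricci flow smoothing results together with standard parabolic estimates, rather than carry out a new construction from scratch. The hypotheses are scale-invariant, so uniformity of constants will be the main thing to track.

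For the existence of $g(t)$ together with items (I)--(III), I would invoke the pyramid Ricci flow construction. In the Riemannian PIC1 case (B)(i), this is due to Bamler--Cabezas-Rivas--Wilking: starting from a complete non-collapsed weakly PIC1 datum, they produce an immortal complete Ricci flow with $|\Rm(g(t))|\le \a t^{-1}$; the preservation of $\mathrm{C}_{PIC1}$ along the flow is then the invariant-cone result of Brendle--Wilking via Hamilton's ODE--PDE maximum principle. In the \K case (B)(ii), one uses the \K analogue worked out by Lee--Tam and Huang--Tam, where the Kähler cone cut out by $\mathrm{OB}\ge 0$ and $\Ric\ge 0$ plays the role of the invariant cone and its preservation along the \KR flow is a Mok-type maximum principle argument in the bundle of algebraic curvature tensors.

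To obtain the non-collapsing (IV) with the same constant $v$, I would use Perelman's $\nu$-entropy monotonicity. Hypothesis (A), applied at every basepoint and every scale, furnishes a uniform lower bound on $\mu(g_0, r^2)$ depending only on $(n,v)$; this lower bound is propagated to $g(t)$ by monotonicity and yields exactly the scale-invariant volume lower bound (IV). Alternatively, the pyramid construction can be set up so that (IV) is part of its direct output.

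For the heat kernel estimates, the pointwise comparison $K\le G$ follows immediately from $\cR\ge 0$ (implied by (II) or (III)) by applying the maximum principle to $G-K$, which satisfies $\heat(G-K)-\cR(G-K)=\cR K\ge 0$ with zero initial data. For the Gaussian upper bound on $G$, the curvature decay (I) and the non-collapsing (IV) together yield a uniform Sobolev inequality along the flow, whence Moser iteration produces the on-diagonal bound $G(x,t;y,s)\le C_0(t-s)^{-n/2}$; Davies' integral trick, adapted to the time-dependent setting, then upgrades this to the claimed off-diagonal Gaussian form. The main obstacle throughout is maintaining uniformity: every constant must depend only on $(n,v)$, independently of scale and basepoint, which is precisely why the pyramid Ricci flow construction (built so as to be scale-invariant) is the correct input, and why Perelman's entropy (also scale-invariant) is the correct tool for the non-collapsing step.
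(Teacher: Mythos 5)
Your overall strategy -- assemble the statement from existing Ricci flow smoothing machinery plus parabolic estimates -- is exactly the paper's strategy, but several of the ingredients differ from what the paper actually uses, and two places need more care.

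For existence with (I)--(III), the paper does not invoke a single ``pyramid'' theorem that directly yields an immortal flow. It cites Simon--Topping (for $n=3$) and Lai (for $n\geq 4$) in the PIC1 case, and Lee--Tam in the K\"ahler case, all of which are \emph{short-time} existence results. The immortality is then obtained by a separate scaling argument (rescale the initial data, run the short-time flow, rescale back, and take a diagonal limit using the $\a t^{-1}$ curvature bound), following HeLee2021. Your write-up asserts immortality as a direct output of the cited construction, which is a gap: as stated, the smoothing results only give a definite short-time interval (whose length is scale-dependent), and you must say explicitly how to extend.

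For (IV), the paper does \emph{not} use Perelman's $\nu$-entropy. It observes that $\Ric(g(t))\geq 0$ is preserved (a consequence of (II) or (III)), appeals to Yokota's theorem that the asymptotic volume ratio is constant along a Type III Ricci flow with $\Ric\geq 0$, and then uses Bishop--Gromov at each time slice to recover the pointwise lower bound with the \emph{same} constant $v$. Your entropy route is plausible in spirit but lossy: passing from a volume-ratio lower bound to an entropy lower bound and back through $\kappa$-non-collapsing does not return the exact constant $v$ stated in (IV), and entropy monotonicity on a non-compact manifold with unbounded curvature near $t=0$ needs justification. The AVR approach is both sharper and more elementary here.

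For the heat kernel, your argument for $K\leq G$ via $(\p_t-\Delta-\cR)(G-K)=\cR K\geq 0$ and the maximum principle with zero initial data is essentially identical to the paper's and is correct (the paper needs to handle $s=0$ as a limit $s\downarrow 0$, since the flow is not smooth at $t=0$, which you should note). For the Gaussian upper bound on $G$, the paper simply cites Bamler--Cabezas-Rivas--Wilking, Proposition 3.1, together with a scaling argument; your proposal to rederive it via a uniform Sobolev inequality, Moser iteration and Davies' method is a valid alternative strategy that is technically more involved -- in particular, establishing a uniform Sobolev inequality from (I) and (IV) in the non-compact, possibly unbounded-curvature-near-$t=0$ setting needs its own argument (this is precisely what the BCW estimate packages for you).

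\end{document}
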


\begin{proof}
In the Riemannian setting (B)(i), the short-time existence of a Ricci flow satisfying (I) and (II) was proven by Simon-Topping \cite{SimonToppingGT} when $n=3$, and Lai \cite{Lai2019} when $n \geq 4$. In the \K setting (B)(ii), the short-time existence of a Ricci flow with properties (I) and (III) was proven by the second named author and Tam \cite{LeeTamGT}. The long-time existence of such a Ricci flow then follows from a scaling argument (see for example \cite[Theorem 1.1 \& Corollary 4.1]{HeLee2021}). More precisely, since the assumptions are scaling invariant, by rescaling the metric, running the Ricci flow, and then parabolically rescaling the flow back, we find a sequence of Ricci flows which exist for an arbitrarily long time. Using (I), the long-time existence follows after passing to a subsequence and taking a limit. (IV) follows from the preservation of the asymptotic volume ratio under $\Ric\geq 0$ and $|\Rm|\leq \a t^{-1}$ \cite{Yok}. The Gaussian upper bound on the conjugate heat kernel $G$ follows from  Bamler, Cabezas-Rivas, and Wilking \cite[Proposition 3.1]{BamlerCabezasWilking2019} and a scaling argument (see also \cite{LeeTam2022,ChanLee2023}). It remains to estimate the heat kernel $K$. Since the scalar curvature $\mathcal{R}\geq 0$ along the Ricci flow, for all $0<s<t<+\infty$
\begin{equation}
\begin{split}
    \heat G(x,t;y,s)&=\mathcal{R}_{g(t)}G(x,t;y,s)\\
   & \geq 0=\heat K(x,t;y,s).
\end{split}
\end{equation}

Since $g(t)$ has bounded curvature for $t>0$ and $$\lim_{t\to s}G(x,t;y,s)=\lim_{t\to s}K(x,t;y,s)=\delta_y(x),$$ we deduce from the standard maximum principle 
that $G(x,t;y,s)\geq K(x,t;y,s)$ for all $t>s>0$. The assertion when $s=0$ follows by taking $s\downarrow 0$.  
\end{proof}

We also need the distance distortion estimate along Ricci flow originally due to Simon-Topping \cite{SimonToppingJDG,SimonToppingGT}, see also \cite{Hamilton-Formation}.
\begin{prop}\label{prop:dist}
Suppose $(M^n,g(t)),t\in (0,T]$ is a smooth complete Ricci flow such that 
\begin{equation}
    \Ric(g(t))\geq 0,\;\; |\Rm(g(t))|\leq \a t^{-1},
\end{equation}
for some $\a>0$. Then the following three statements hold:
\begin{enumerate}
    \item[(i)] there exists $C_0(n,\a)>0$ such that for all $0<s\leq t\leq T$ and $x,y\in M$, 
    $$d_{g(t)}(x,y)\leq d_{g(s)}(x,y)\leq d_{g(t)}(x,y)+C_0\sqrt{t-s};$$
    \item[(ii)] there is a well-defined metric $d_0$ on $M\times M$ as $t\downarrow 0$,
    $$d_0(x,y) := \lim_{t\to 0}d_{g(t)}(x,y);$$
   
   \item[(iii)] there exists $\gamma(n,\a,T)>0$ such that for all $x,y\in M\times (0,T]$, $$d_{g(t)}(x,y)\geq \gamma \left( d_0(x,y) \right)^{1+2n\a}.$$   
In particular,  the metric $d_0$ generates the same topology as that of $(M,d_{g(t)})$ for all $t\in (0,T]$.
\end{enumerate}
\end{prop}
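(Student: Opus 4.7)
My plan is to handle the three parts in order, using (i) to feed (ii) directly, and combining (i) with an auxiliary bi-Lipschitz comparison to obtain (iii). Throughout, $C_0 = C_0(n,\alpha)$ is a constant that may vary from line to line.

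For (i), the upper bound $d_{g(t)} \leq d_{g(s)}$ for $s\le t$ is immediate: $\Ric \geq 0$ forces $\p_\tau g = -2\Ric \leq 0$, so $g(\tau)$ is pointwise non-increasing in $\tau$ and hence so are distances. For the additive distortion $d_{g(s)}(x,y) \leq d_{g(t)}(x,y) + C_0\sqrt{t-s}$ I would invoke Hamilton's classical distance-change lemma (cf.~\cite{Hamilton-Formation, SimonToppingJDG}): if $\Ric(g(\tau)) \leq (n-1)K\, g(\tau)$ on the $g(\tau)$-balls of radius $r_0$ about $x$ and $y$, with $d_{g(\tau)}(x,y) \geq 2r_0$, then at differentiable times
\[
\tfrac{d^-}{d\tau}\, d_{g(\tau)}(x,y) \;\geq\; -2(n-1)\lf(Kr_0 + r_0^{-1}\ri).
\]
Plugging in $K = \alpha/\tau$ and $r_0 = \sqrt{\tau}$ gives a lower bound $\geq -C_0\tau^{-1/2}$; integrating from $s$ to $t$ yields the claim. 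The short-distance case $d_{g(\tau)}(x,y) < 2\sqrt{\tau}$ can be absorbed via the multiplicative comparison developed below.

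Part (ii) is then formal: the function $s \mapsto d_{g(s)}(x,y)$ is monotone non-increasing on $(0,T]$ and bounded above by $d_{g(T)}(x,y) + C_0\sqrt{T}$, so the pointwise limit $d_0(x,y) := \lim_{s \downarrow 0} d_{g(s)}(x,y)$ exists in $[0,+\infty)$. Symmetry and the triangle inequality pass to the limit from each $d_{g(s)}$; non-degeneracy and topological equivalence are deferred to (iii).

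For (iii), the new input is a pointwise bi-Lipschitz comparison of the time-slices. Using $0 \leq \Ric(g(\tau)) \leq (n-1)\alpha\,\tau^{-1} g(\tau)$, for each $v \in T_pM$ the function $h(\tau) := g(\tau)(v,v)$ satisfies $h'(\tau) \geq -2(n-1)\alpha\,\tau^{-1} h(\tau)$, and integrating the ODE gives $g(s) \leq (t/s)^{2(n-1)\alpha} g(t)$ for $0 < s \leq t \leq T$, hence $d_{g(s)}(x,y) \leq (t/s)^{(n-1)\alpha}\, d_{g(t)}(x,y)$ by evaluating lengths on a $g(t)$-minimiser. Combining this with the downward-limit consequence of (i), $d_0(x,y) \leq d_{g(s)}(x,y) + C_0\sqrt{s}$, I obtain for every $s \in (0,t]$
\[
d_0(x,y) \;\leq\; \lf(\tfrac{t}{s}\ri)^{(n-1)\alpha} d_{g(t)}(x,y) \;+\; C_0\sqrt{s}.
\]
Minimising in $s$ (balancing the two terms via $s^{(n-1)\alpha + 1/2} \sim t^{(n-1)\alpha}\, d_{g(t)}(x,y)$ when this is $\leq t$, and otherwise taking $s = t$) yields, on bounded $d_0$-sets, an estimate of the shape $d_0(x,y) \leq C(n,\alpha,T)\, d_{g(t)}(x,y)^{1/(1+2(n-1)\alpha)}$, which rearranges to the claimed H\"older lower bound $d_{g(t)}(x,y)\geq\gamma\,d_0(x,y)^{1+2n\alpha}$ (the advertised exponent $1+2n\alpha$ is slightly weaker than what the optimisation delivers, which lets us absorb the multiplicative constant). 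Nondegeneracy of $d_0$ and its topological equivalence with each $d_{g(t)}$ then follow at once from the sandwich $\gamma\,d_0^{1+2n\alpha} \leq d_{g(t)} \leq d_0$. The main book-keeping I expect to attend to carefully are the two case splits---the short-distance exception in Hamilton's lemma, and the regime switch $s = s_*$ versus $s = t$ in the optimisation---both of which are routine once the additive $\sqrt{t-s}$ distortion and the multiplicative $(t/s)^{(n-1)\alpha}$ comparison are in hand.
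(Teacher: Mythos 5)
Your argument is correct and is essentially the standard Simon--Topping distance-distortion proof that the paper itself does not reproduce but cites to \cite{SimonToppingJDG,SimonToppingGT,Hamilton-Formation}: monotonicity from $\Ric\geq 0$, Hamilton's lemma with $K=\a/\tau$, $r_0=\sqrt{\tau}$ for the additive $\sqrt{t-s}$ bound, and the integrated ODE $g(s)\leq (t/s)^{2(n-1)\a}g(t)$ optimised against the additive bound for the H\"older lower bound. Two small points of care, both of which you flag: for the short-distance regime $d_{g(\tau)}(x,y)<2\sqrt{\tau}$ the cleaner standard fix is the direct first-variation estimate $\frac{d}{d\tau}d_{g(\tau)}(x,y)\geq -\int_\gamma \Ric(\gamma',\gamma')\geq -C(n)\a\,\tau^{-1}d_{g(\tau)}(x,y)\geq -C(n,\a)\tau^{-1/2}$ along a minimiser, though your multiplicative-comparison route also closes (note that, by monotonicity of $\tau\mapsto d_{g(\tau)}$ against $\tau\mapsto 2\sqrt{\tau}$, the set where Hamilton's lemma applies is an initial subinterval of $[s,t]$, so only one switchover occurs); and the inequality in (iii) is, as you observe, only meaningful on bounded $d_0$-sets (since $d_{g(t)}\leq d_0$ forces $d_0\leq \gamma^{-1/(2n\a)}$), which is all that the topological conclusion requires.
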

\medskip

Next, we need a local maximum principle proven in \cite[Theorem 1.1]{{LeeTam2022}} which plays key role in localizing the error.
\begin{thm}\label{thm:Local-MP}
    Let $(M^n, g(t))$, $t\in [0,T]$ be a smooth (not necessarily complete) solution to the Ricci flow such that \[
    \Ric(g(t))\leq \alpha t^{-1}
    \]
    for $t\in (0,T]$. Suppose that $\varphi(x,t)$ and $L(x,t)$ are continuous functions on $M\times [0,T]$ such that 
    \[
    \varphi(x,t)\leq \alpha t^{-1}, \quad L(x,t)\leq \alpha t^{-1},
    \] 
     and 
    \[
    \left(\frac{\partial}{\partial t}-\Delta_{g(t)}\right)\varphi\Big\vert_{(x_0,t_0)}\leq L(x_0,t_0) \varphi(x_0,t_0),
    \]
    in the barrier sense, whenever $\varphi(x_0,t_0)>0$. Suppose $\varphi(\cdot,0)\leq 0$ on $B_{g(0)}(p,2)\Subset M$ for some $p\in M$. Then for any $\ell>\alpha+1$, there exists $\hat T=\hat T(n,\alpha, \ell) > 0$ such that 
    \[
    \varphi(p,t)\leq t^\ell, \quad \forall t\in [0,T\wedge \hat T].
    \]
\end{thm}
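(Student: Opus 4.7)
The plan is to argue by contradiction and apply the parabolic maximum principle to a suitable localization of $\varphi - t^\ell$. Set $\tilde\varphi := \varphi - t^\ell$. On $\{\tilde\varphi>0\}\subset\{\varphi>0\}$, the barrier inequality $\heat\varphi \leq L\varphi$ combined with $L\leq\alpha/t$ and $\varphi=\tilde\varphi+t^\ell$ yields the drift inequality
\[
\heat\tilde\varphi \;\leq\; \frac{\alpha}{t}\,\tilde\varphi \;-\; (\ell-\alpha)\,t^{\ell-1}
\]
in the barrier sense. The strictly negative drift, available because $\ell>\alpha$, is the seed of the argument; the $\alpha/t$ coefficient will be removed by the integrating weight $t^{-\alpha}$.

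To localize near $p$ without access to a lower Ricci bound, I would use Perelman's one-sided changing-distances lemma: under $\Ric \leq \alpha/t$, at scale $r_0=\sqrt{t}$ one obtains a dimensional constant $A=A(n,\alpha)$ with $\heat\bigl[d_{g(t)}(x,p)+A\sqrt{t}\bigr]\geq 0$ on $\{d_{g(t)}(x,p)>2\sqrt{t}\}$ in the barrier sense. Writing $\rho(x,t):=d_{g(t)}(x,p)+A\sqrt{t}$ and fixing a smooth decreasing $\chi\colon[0,\infty)\to[0,1]$ with $\chi\equiv 1$ on $[0,3/2]$ and $\chi\equiv 0$ on $[7/4,\infty)$, the cutoff $\eta(x,t):=\chi(\rho(x,t))^k$ (for a large integer $k$) is supported in $B_{g(0)}(p,2)$, equals $1$ near $p$ once $A\sqrt{t}<3/2$, and satisfies the barrier bounds $\heat\eta\leq C_k\chi^{k-1}\mathbf 1_{\{\chi<1\}}$ and $|\nabla\eta|^2/\eta\leq C_k\chi^{k-2}\mathbf 1_{\{\chi<1\}}$, with the errors supported strictly away from $p$.

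The final step is the maximum principle applied to $G:=t^{-\alpha}\eta\tilde\varphi$ on $M\times[0,T\wedge\hat T]$. At any interior positive maximum the condition $\nabla(\eta\tilde\varphi)=0$ eliminates the cross term $\nabla\eta\cdot\nabla\tilde\varphi$, and the $\alpha/t$ term is cancelled exactly by the weight, producing in the barrier sense
\[
\heat G \;\leq\; -(\ell-\alpha)\,\eta\,t^{\ell-1-\alpha} \;+\; t^{-\alpha}\,\tilde\varphi\bigl[\heat\eta + 2|\nabla\eta|^2/\eta\bigr].
\]
On $\{\chi=1\}$ the error vanishes and the right-hand side is strictly negative, so no positive maximum can occur there. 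In the transition region $\{\chi<1\}$, which sits at $g(t)$-distance $\sim 1$ from $p$, the crude bound $\tilde\varphi\leq\alpha/t$ alone is insufficient; one upgrades it by a bootstrap that exploits the scale invariance of the hypotheses under $t\mapsto\sigma t$ and $\varphi\mapsto\sigma\varphi$, iteratively improving $\tilde\varphi\leq Ct^{\ell'-1}$ in the transition region until the drift $(\ell-\alpha)\eta t^{\ell-1-\alpha}$ dominates. Together with $F(\cdot,0):=\eta(\cdot,0)\tilde\varphi(\cdot,0)\leq 0$ on $B_{g(0)}(p,2)$ (from the hypothesis $\varphi(\cdot,0)\leq 0$), this forces $G\leq 0$ on $[0,\hat T]$, hence $\varphi(p,t)\leq t^\ell$. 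The main obstacle is exactly the transition-region analysis: only a one-sided distance distortion is available, which both forces the $A\sqrt{t}$ correction inside $\rho$ and, through a $1/t$ loss in the cutoff error, necessitates the numerical margin $\ell>\alpha+1$ (rather than the softer $\ell>\alpha$) so that $t^{\ell-1}$ can ultimately beat the error on a short time interval.
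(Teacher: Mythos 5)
The paper does not prove this statement at all: it is quoted from Lee--Tam \cite[Theorem 1.1]{LeeTam2022}, so your attempt has to be measured against that proof. Your toolkit is the right one and matches the spirit of the cited argument: the barrier $\rho=d_{g(t)}(\cdot,p)+A\sqrt{t}$ with $\heat\rho\ge 0$ in the barrier sense outside a ball of radius $\sim\sqrt t$ (Perelman's changing-distance estimate, which indeed needs only the upper bound $\Ric\le\alpha t^{-1}$), a cutoff composed with $\rho$, and a weight cancelling the coefficient $\alpha/t$.

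The genuine gap is exactly the step you defer to a ``bootstrap'': controlling the cutoff error in the transition region is the whole content of the theorem, and your sketch does not supply it. With only the crude bound $\tilde\varphi\le\alpha t^{-1}$ available on $\{\chi<1\}$, a single application of the maximum principle to $G=t^{-\alpha}\eta\tilde\varphi$ gives at best an improvement of roughly one power of $t$ at the centre; it cannot yield $t^{\ell}$ for arbitrary $\ell$. Invoking ``scale invariance under $t\mapsto\sigma t$'' does not create the missing powers: the conclusion $\varphi(p,t)\le t^{\ell}$ is not scale invariant, and each improvement produced by your argument holds only where the cutoff is identically $1$, i.e.\ on a strictly smaller ball, not ``in the transition region'' of the same cutoff. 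What is required (and what Lee--Tam actually do) is a quantitative one-step lemma of the form ``$\varphi\le a\,t^{\sigma}$ on $B_{g(0)}(p,r)$ implies $\varphi\le C a\,t^{\sigma+1-\delta}$ on a smaller ball for $t$ small'', iterated a number of times determined by $\ell$, with the radii shrinking from $2$ down to $p$; this is precisely why $\hat T$ depends on $\ell$ and where the margin $\ell>\alpha+1$ enters, none of which your sketch sets up. Two further points you gloss over would also need repair: (i) $G$ need not be $\le 0$ in any usable sense as $t\downarrow 0$, since $\varphi(\cdot,0)\le 0$ is compatible with $\varphi(x,t)\sim t^{\beta}$, $\beta<\alpha$, near points where $\varphi(\cdot,0)=0$, so the weight $t^{-\alpha}$ can make $G$ blow up at the initial time; and (ii) the containment of the support of $\eta(\cdot,t)$ in $B_{g(0)}(p,2)\Subset M$ --- the only region where the initial hypothesis, and indeed the (possibly incomplete) manifold, are available --- is not automatic and must be extracted from a shrinking-balls estimate of the form $d_{g(t)}(p,\cdot)\ge d_{g(0)}(p,\cdot)-\beta\sqrt{\alpha t}$, proved again via the Perelman-type estimate in the incomplete setting.
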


\section{Soliton obstruction tensor: Riemannian case}
In this section we follow \cite{DSS2024} and consider the following family of covariant symmetric $2$-tensors
\begin{equation}\label{obten}
    S(t) := -2t\Ric(g(t))-g(t)+2\nabla^2 u, \quad \forall t>0,
\end{equation}
where $g(t)$ is a fixed underlying Ricci flow on $M$ for $t>0$, and $u$ a fixed smooth solution to the heat equation $\heat u =0$ on $M \times (0,\infty)$. Since we only study the rigidity at the level of the Hessian of $u$, we work on the heat equation directly, which differs slightly from \cite{DSS2024}.

Ultimately our goal is to show that, for a Ricci flow as in the setting of Theorem \ref{thm:main-1}, $S\equiv 0$ if the initial data of $u$ is chosen suitably. As noted in Remark \ref{rmk ot}, we may then conclude that our Ricci flow is an expanding gradient Ricci soliton. In order to do this, instead of considering the tensor $S$ or the scalar quantity $|S|^2$ directly, we consider the curvature type tensor $S\owedge g$. This is motivated by the work of Tsui-Wang \cite{TsuiWang} on area decreasing maps between manifolds evolving under mean curvature flow. The following is a simple linear algebra fact.  
\begin{lma}\label{lma:linear-alge}
If $n\geq 3$, then $S\owedge g\equiv 0$ is equivalent to $S\equiv 0$.
\end{lma}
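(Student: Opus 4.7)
The forward implication is immediate from the definition of the Kulkarni–Nomizu product, so the task is the reverse direction: $S \owedge g \equiv 0 \Rightarrow S \equiv 0$. My plan is a pure linear-algebra calculation based on two successive metric contractions.

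Recall the explicit formula
\[
(S \owedge g)_{ijkl} = S_{ik}g_{jl} + S_{jl}g_{ik} - S_{il}g_{jk} - S_{jk}g_{il}.
\]
First I would contract this identity with $g^{jl}$. Using $g^{jl}g_{jl}=n$ and $g^{jl}S_{jl} = \operatorname{tr}_g S$, the four terms collapse to
\[
n S_{ik} + (\operatorname{tr}_g S)\, g_{ik} - S_{ik} - S_{ik} = (n-2) S_{ik} + (\operatorname{tr}_g S)\, g_{ik}.
\]
Thus $S \owedge g = 0$ forces
\begin{equation}\label{eq:trace-relation}
(n-2) S_{ik} + (\operatorname{tr}_g S)\, g_{ik} = 0.
\end{equation}

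Next I would contract \eqref{eq:trace-relation} once more with $g^{ik}$ to obtain $(n-2)\operatorname{tr}_g S + n\,\operatorname{tr}_g S = 0$, i.e.\ $(2n-2)\operatorname{tr}_g S = 0$. Since $n \geq 3 \geq 2$, this gives $\operatorname{tr}_g S = 0$. Plugging this back into \eqref{eq:trace-relation} and using $n-2 \neq 0$ (again by $n\geq 3$), I conclude $S_{ik} = 0$ pointwise.

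I don't anticipate any genuine obstacle here; the only place the hypothesis $n \geq 3$ is used is to ensure the coefficients $n-2$ and $2n-2$ are nonzero so that the two contractions can be inverted. No analytic ingredient is needed, and the argument is purely algebraic and pointwise.
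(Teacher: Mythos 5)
Your argument is correct, but it proceeds by a different (and equally elementary) route than the paper. The paper diagonalises $S$ in an orthonormal frame, reads off from the components $(S\owedge g)_{ijji}=\lambda_i+\lambda_j=0$ for $i\neq j$, and then solves this linear system for the eigenvalues using three distinct indices — this is where $n\geq 3$ enters, and it has the minor advantage of only using the ``sectional'' components of $S\owedge g$ on orthonormal $2$-planes rather than the full tensor. You instead take the double metric trace: contracting $S\owedge g=0$ once gives $(n-2)S+(\operatorname{tr}_g S)\,g=0$, contracting again gives $\operatorname{tr}_g S=0$, and hence $S=0$ since $n-2\neq 0$. This is the standard proof that $S\mapsto S\owedge g$ is injective for $n\geq 3$, it is frame-free and pointwise, and both arguments are complete; the only caveat is to fix a sign/index convention for the Kulkarni--Nomizu product consistently (the paper's convention has $(S\owedge g)_{ijji}=S_{ii}g_{jj}+S_{jj}g_{ii}$ for $i\neq j$), though an overall sign does not affect the injectivity conclusion.
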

\begin{proof}
    Since $S$ is symmetric, we might find an orthonormal frame $\{e_i\}_{i=1}^n$ with respect to $g$ such that $S_{ij}=\lambda_i \delta_{ij}$. Hence for $i\neq j$,
    \begin{equation}
        (S\owedge g)_{ijji}=S_{ii} g_{jj}+S_{jj}g_{ii}=\lambda_i+\lambda_j=0.
    \end{equation} 
    Since $n\geq 3$, 
    \begin{equation}
       2\lambda_1=(\lambda_1+\lambda_2)+(\lambda_2+\lambda_3)+(\lambda_1+\lambda_3)-2(\lambda_2+\lambda_3)=0.
    \end{equation}
    Repeating the argument for each $i$, we see that $S\equiv 0$.
\end{proof}

\medskip

For each $(x,t)\in M\times (0,\infty)$, define 
\begin{equation}\label{defn:min-max-S2}
\left\{
\begin{array}{ll}
 \varphi_{-}(x,t)=\displaystyle\inf\left\{\kappa\geq 0:  S\owedge g+\kappa \cdot \frac12 g\owedge g > 0\right\};\\[3mm]
    \varphi_{+}(x,t)=\displaystyle\inf\left\{\kappa\geq 0:  S\owedge g-\kappa \cdot \frac12 g\owedge g < 0\right\},
\end{array}
   \right.
\end{equation}
where the positivity of the $(4,0)$-tensor corresponds to lying within the cone of positive curvature operators. Clearly, if $\varphi_-=\varphi_+=0$ then $S\owedge g\equiv 0$. The following key observation is that under $\mathrm{Rm}(g(t))\in\mathrm{C}_{PIC1}$, both $\varphi_{-}$ and $\varphi_{+}$ satisfy a better evolution inequality. 
\begin{lma}\label{lma:evo-S2}
Given the above setting, suppose $\mathrm{Rm}(g(t))\in \mathrm{C}_{PIC1}$ for all $t>0$. Then, if $\varphi$ is equal to either $\varphi_-$ or $\varphi_+$ as defined in \eqref{defn:min-max-S2},
$$\heat \varphi\leq \mathcal{R}\varphi $$
on $M\times (0,\infty)$ whenever $\varphi\geq 0$ in the barrier sense. Here $\mathcal{R}$ denotes the scalar curvature of $g(t)$.
\end{lma}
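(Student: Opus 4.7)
The plan is to apply Hamilton's tensor maximum principle to $S\owedge g$ viewed as a symmetric operator on $\Lambda^2 TM$. A short linear-algebra check shows that $\frac{1}{2}g\owedge g=\mathcal{I}$ is the identity on $\Lambda^2$, and in any $g$-orthonormal basis diagonalising $S$ with eigenvalues $\mu_1\le\cdots\le\mu_n$, the operator $S\owedge g$ is diagonal in $\{e_i\wedge e_j\}_{i<j}$ with eigenvalues $\mu_i+\mu_j$. Hence $\varphi_-=\max\{0,-(\mu_1+\mu_2)\}$ and $\varphi_+=\max\{0,\mu_{n-1}+\mu_n\}$, so it suffices to bound the extremal pair-sums of eigenvalues of $S$. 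I handle $\varphi_-$; the argument for $\varphi_+$ is entirely symmetric.

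First I derive the evolution of $S$. Using Hamilton's identity $\heat\Ric_{ij}=2(R_{ikjl}R^{kl}-R_{ik}R_j{}^k)$, the variation $\heat g=2\Ric$, and the Bochner-type commutator
\begin{equation*}
\heat(\nabla_i\nabla_j u)=-2R_{ikjl}\nabla^k\nabla^l u+R_i{}^k\nabla_k\nabla_j u+R_j{}^k\nabla_i\nabla_k u+(\nabla u,\nabla\Rm)\text{-terms}
\end{equation*}
valid for $u$ satisfying $\heat u=0$, combined with the substitution $2\nabla^2 u=S+2t\Ric+g$ to cancel the quadratic-in-$\Rm$ contributions that appear, one obtains
\begin{equation*}
\heat S_{ij}=-2R_{ikjl}S^{kl}+R_i{}^k S_{kj}+R_j{}^k S_{ki}+E_{ij},
\end{equation*}
where $E_{ij}$ collects gradient transport remainders. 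Promoting this to the level of the $4$-tensor via $\nabla g=0$ and $\partial_t g=-2\Ric$ yields $\heat(S\owedge g)=(\heat S)\owedge g-2\,S\owedge\Ric$.

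Now fix a point $(x_0,t_0)$ with $\varphi_-(x_0,t_0)>0$ and choose a $g(t_0)$-orthonormal frame $\{e_i\}$ at $x_0$ diagonalising $S(x_0,t_0)$. The 2-form $\omega_0:=e_1\wedge e_2$ saturates $(S\owedge g)(\omega_0,\omega_0)=-\varphi_-$ and $\mathcal{I}(\omega_0,\omega_0)=1$. Extend $\omega_0$ to a smooth 2-form $\tilde\omega$ in a spatial neighborhood of $x_0$ by parallel transporting the frame along radial $g(t_0)$-geodesics, so $\nabla\tilde\omega(x_0)=0$. Define the scalar function
\begin{equation*}
F(x,t):=-\frac{(S\owedge g)(\tilde\omega,\tilde\omega)}{\mathcal{I}(\tilde\omega,\tilde\omega)}(x,t).
\end{equation*}
By the Rayleigh quotient characterisation of $\varphi_-$, we have $F\le\varphi_-$ near $(x_0,t_0)$ with equality at $(x_0,t_0)$, so $F$ is a smooth lower barrier. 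A direct computation using the parallel gauge and the evolution derived above expresses $\heat F(x_0,t_0)$ as a specific linear combination of the sectional curvatures $R_{1k1k},R_{2k2k},R_{1k2k}$ (for $k\ge 3$) and $R_{1212}$, weighted by the differences $\mu_k-\mu_1,\mu_k-\mu_2$ and the sum $\mu_1+\mu_2$, together with the contribution from $-2\,S\owedge\Ric$ evaluated on $(\omega_0,\omega_0)$.

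The PIC1 condition enters decisively: applied to orthonormal triples $\{e_1,e_2,e_k\}$ with $k\ge 3$ it yields $R_{1k1k}+R_{2k2k}\ge 0$, while applied to complex isotropic simple 2-planes built from $\{e_1,e_2,e_k,e_\ell\}$ (in the spirit of Brendle's algebraic analysis of the PIC1 cone) it controls the cross-terms $R_{1k2k}$ and $R_{12k\ell}$. Combining these with the extremality constraint $\mu_i+\mu_j\ge-\varphi_-$ for all $(i,j)\ne(1,2)$ allows the reaction terms to be absorbed into $\varphi_-\cdot\bigl(\sum_{k\ge 3}(R_{1k1k}+R_{2k2k})+2R_{1212}\bigr)=\mathcal{R}\varphi_-$. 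The transport remainder $E_{ij}$ contributes nothing at the touching point by the first-order tangency of $F$ to $\varphi_-$ combined with $\nabla\tilde\omega(x_0)=0$. This gives $\heat F(x_0,t_0)\le\mathcal{R}(x_0,t_0)\varphi_-(x_0,t_0)$ as required. The main obstacle is precisely this final algebraic step: since $\omega_0$ is real rather than complex isotropic, PIC1 must be invoked through several carefully chosen isotropic 2-forms involving $e_1,e_2$ and each $e_k,e_\ell$, and summed so that the resulting bound is exactly the coefficient $\mathcal{R}$ with no residual uncontrolled term.
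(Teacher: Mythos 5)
Your overall strategy is the same as the paper's: diagonalise $S$ at the touching point, observe that the extremal eigenvalue of $S\owedge g$ is the extremal pair-sum $\mu_1+\mu_2$ (resp.\ $\mu_{n-1}+\mu_n$), build a lower barrier from the plane $e_1\wedge e_2$, and control the reaction term using PIC1 on orthonormal triples. However, the step you yourself flag as ``the main obstacle'' is left unresolved, and the way you describe it contains two genuine errors. First, the identity you assert at the end is false: $\sum_{k\ge 3}(R_{1k1k}+R_{2k2k})+2R_{1212}=R_{11}+R_{22}$, not $\mathcal{R}$. The reaction term is absorbed into $\varphi\,(R_{11}+R_{22})$, and to pass from this to $\mathcal{R}\varphi$ you must additionally use $\Ric\ge 0$ (which weakly PIC1 implies for $n\ge 4$, and is equivalent to it for $n=3$) together with $\varphi\ge 0$; this ingredient is absent from your argument. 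Second, your worry about the cross-terms $R_{1k2k}$ and $R_{12k\ell}$, and the appeal to ``several carefully chosen isotropic 2-forms,'' is a red herring: since the frame diagonalises $S$ at $(x_0,t_0)$, the reaction term is exactly $-2\sum_k(R_{1kk1}+R_{2kk2})\mu_k$ with no off-diagonal contractions. The only PIC1 input needed is the degenerate inequality $R_{1k1k}+R_{2k2k}\ge 0$ (from the simple isotropic plane $(e_1+ie_2)\wedge e_k$), combined with the ordering $2\mu_k\ge\mu_1+\mu_2$ for $k\ge 3$. As written, your proof does not close.

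Two further points. Your evolution equation for $S$ carries undetermined ``gradient transport remainders'' $E_{ij}$, which you dismiss on the grounds of first-order tangency and $\nabla\tilde\omega(x_0)=0$; that reasoning is invalid, since a nonzero $E_{11}+E_{22}$ would enter $\heat F$ directly at the touching point and a term of the form $\nabla u\ast\nabla\Rm$ cannot be bounded by $\mathcal{R}\varphi$. The correct statement is that $E_{ij}\equiv 0$: the $\nabla\Rm\ast\nabla u$ contributions from $[\partial_t,\nabla^2]$ and $[\Delta,\nabla^2]$ cancel via the contracted second Bianchi identity, which is precisely the content of the identities from Chow--Lu--Ni that the paper invokes, yielding $\heat S_{ij}=2R_{iklj}S_{kl}-R_{ik}S_{kj}-R_{kj}S_{ik}$ exactly. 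Finally, your fixed parallel frame at time $t_0$ forces you to track the $-2\,S\owedge\Ric$ term and the time-dependence of the normalisation $\mathcal{I}(\tilde\omega,\tilde\omega)$; the paper avoids this bookkeeping with the Uhlenbeck trick $\partial_t E_A^i=R^i_jE_A^j$, under which the frame stays orthonormal and the Ricci terms in the evolution of $S$ cancel against the frame evolution. Your gauge can be made to work, but you have not verified that these extra terms combine correctly.
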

\begin{proof}
We only consider $\varphi=\varphi_-$ since the discussion of $\varphi_{+}$ is identical. Recall from \cite[Lemma 2.33 \& Lemma 2.40]{ChowBook} that the tensor $S$ satisfies 
\begin{equation}\label{evo-S-eqn}
    \begin{split}
        \heat S_{ij}&=-2t\left(2R_{iklj} R_{kl}- R_{ik}R_{kj} -R_{jk}R_{ik}\right)\\
        &\quad +2\left(2R_{kijl}u_{kl}-R_{ik}u_{kj}-R_{kj}u_{ik} \right)\\
        &=2R_{iklj}S_{kl}-R_{ik}S_{kj}-R_{kj}S_{ik}.
    \end{split}
\end{equation}
under an orthonormal frame. At a point $(x_0,t_0)$, we may assume that $\varphi \geq  0$ and $S\owedge g+\varphi \cdot \frac12 g\owedge g\in \partial \mathrm{C}_{Rm}$, as otherwise $\varphi\equiv 0$ locally, and the conclusion holds trivially. By compactness of the sphere bundle and the definition of $S\owedge g$, we may find an orthonormal frame $\{e_i\}_{i=1}^n$ such that it diagonalises $S$ at $(x_0,t_0)$ so that $S(e_i,e_j)=\lambda_i \delta_{ij}$ with $\lambda_1\leq \lambda_2...\leq \lambda_n$, and thus
$$\left(S\owedge g\right)|_{(x_0,t_0)}(e_1,e_2,e_2,e_1)=-\varphi(x_0,t_0).$$
By using the Uhlenbeck trick, we extend $\{e_i\}_{i=1}^n$ smoothly to an orthonormal frame $\{E_i\}_{i=1}^n$ around $(x_0,t_0)$ so that $\{E_A\}_{A=1}^n$ satisfies 
\begin{equation}\label{vector-bundle}
    \partial_t E_A^i =R^i_j E^j_A
\end{equation}
for each $A\in \{1,...,n\}$. Hence the function $\ell(x,t)= -(S\owedge g)(E_1,E_2,E_2,E_1)$ satisfies $\ell\leq \varphi$ and $\ell(x_0,t_0)=\varphi(x_0,t_0)=-S_{11}-S_{22}$. Combining \eqref{evo-S-eqn} with \eqref{vector-bundle}, we have
\begin{equation}
    \begin{split}
        \heat \ell(x,t)\Big|_{(x_0,t_0)}
        &=-2\sum_{i=3}^n  \left(R_{1ii1}+R_{2ii2}\right)S_{ii}-2R_{1221}\left(S_{11}+S_{22}\right)\\
        &\leq -\sum_{i=3}^n  \left(R_{1ii1}+R_{2ii2}\right)(S_{11}+S_{22})-2R_{1221}\left(S_{11}+S_{22}\right)\\
        &=\varphi\cdot \left(R_{11}+R_{22}\right)\\
        &\leq \mathcal{R}\ell.
    \end{split}
\end{equation}
Here we have used $R \in \mathrm{C}_{PIC1}$ in the first inequality and $\Ric,\varphi\geq 0$ in the second inequality. This completes the proof. 
\end{proof}

\section{Soliton obstruction tensor: \K case}
In the K\"ahler case, we follow the standard conventions from complex geometry. We say that $g(t)$ is a \KR flow if its induced \K form $\omega(t)$ satisfies
\begin{equation}
    \frac{\partial}{\partial t} \omega(t)=-\Ric(\omega(t))=\ddb \log \omega^n.
\end{equation}

We also follow the complex conventions  $$\mathcal{R}_g=\tr_\omega \Ric, \quad \textnormal{ and }\quad \Delta_{g}f=\tr_\omega \ddb f, \quad \forall f\in C^\infty(M).$$ 

In the \K setting, we shall only consider the $(1,1)$ part of the obstruction tensor, or equivalently, the induced $(1,1)$-forms:
\begin{equation}
\a(t) := -t\, \Ric(\omega(t))-\omega(t) +\ddb u \in \Lambda^{1,1}(M),
\end{equation}
where $\heat u=0$ and $\Ric(\omega(t))=-\ddb\log \omega(t)^n$ is the Ricci form of $\omega(t)$, for all $t>0$. As in the Riemannian case, for each $(x,t) \in M \times (0,\infty)$ we consider similar (but simpler) eigenvalues:
\begin{equation}\label{defn:min-max-K}
\left\{
\begin{array}{ll}
 \varphi_{-}(x,t)=\displaystyle\inf\left\{\kappa \geq 0 :  \alpha + \kappa \omega > 0 \right\};\\[3mm]
    \varphi_{+}(x,t)= \displaystyle\inf\left\{\kappa \geq 0 :   \alpha -\kappa \omega < 0 \right\}
\end{array}
   \right.
\end{equation}

The following lemma is a \K parallel to Lemma~\ref{lma:evo-S2}. 
\begin{lma}\label{lma:evo-S-Kahler}
Given the above setting, suppose our solution to the \K Ricci flow satisfies $\mathrm{OB}(g(t)),\Ric(g(t))\geq 0$ for all $t>0$. Then, if $\varphi$ is equal to either $\varphi_-$ or $\varphi_+$ as defined in \eqref{defn:min-max-K}, 
$$\heat \varphi\leq \mathcal{R}\varphi $$
on $M\times (0,\infty)$ whenever $\varphi\geq 0$ in the barrier sense.
\end{lma}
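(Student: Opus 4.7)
The plan is to adapt the strategy of Lemma~\ref{lma:evo-S2} to the K\"ahler setting, working directly with the Hermitian $(1,1)$-tensor $\alpha$ and its real eigenvalues with respect to $\omega$. I treat $\varphi=\varphi_-$; the argument for $\varphi_+$ is entirely analogous after reversing signs and using the largest, rather than smallest, eigenvalue of $\alpha$.

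First, I would derive the evolution equation for $\alpha_{i\bar{j}}$. Using the K\"ahler-Ricci flow identities $\partial_t g_{i\bar{j}}=-R_{i\bar{j}}$ and $\heat R_{i\bar{j}}=R_{i\bar{j}k\bar{l}}R_{l\bar{k}}-R_{i\bar{k}}R_{k\bar{j}}$, together with the K\"ahler Bochner identity for $\heat u_{i\bar{j}}$ when $\heat u=0$, a direct computation shows that the Ricci-squared terms produced by differentiating $-t\Ric$ cancel the curvature terms from $u_{i\bar{j}}$ up to $\Ric\cdot g$ contributions. The outcome is schematically
\begin{equation*}
\heat \alpha_{i\bar{j}} = R_{i\bar{j}k\bar{l}}\,\alpha_{l\bar{k}} \;+\; (\text{symmetric Ricci-}\alpha\text{ cross terms}),
\end{equation*}
where the remaining cross terms (of the form $-R_{i\bar{k}}\alpha_{k\bar{j}}-R_{k\bar{j}}\alpha_{i\bar{k}}$, with appropriate coefficients dictated by Hermitian symmetry) will be absorbed in the next step.

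Next, fix $(x_0,t_0)$ with $\varphi(x_0,t_0)>0$; otherwise the desired inequality holds trivially on a neighborhood. Choose a unitary frame $\{e_i\}_{i=1}^n$ at $(x_0,t_0)$ diagonalizing $\alpha$ with eigenvalues $\lambda_1\leq\cdots\leq\lambda_n$, so $\varphi(x_0,t_0)=-\lambda_1>0$. Extend it to a smooth time-dependent unitary frame $\{E_i\}$ via the K\"ahler analogue of the Uhlenbeck trick \eqref{vector-bundle}, which exactly kills the Ricci-$\alpha$ cross terms. The function $\ell(x,t):=-\alpha(E_1,\bar E_1)$ is then smooth, satisfies $\ell\leq\varphi$ in a neighborhood, with $\ell(x_0,t_0)=\varphi(x_0,t_0)$. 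At $(x_0,t_0)$ we get
\begin{equation*}
\heat \ell \;=\; -\sum_{k=1}^n R_{1\bar 1 k\bar k}\,\lambda_k \;=\; -\lambda_1\,R_{1\bar 1} \;-\; \sum_{k=1}^n R_{1\bar 1 k\bar k}\,(\lambda_k-\lambda_1).
\end{equation*}
The crucial observation is that the $k=1$ term in the last sum vanishes identically, so the uncontrolled holomorphic sectional curvature $R_{1\bar 1 1\bar 1}$ drops out of the argument entirely. For $k\neq 1$, we have $R_{1\bar 1 k\bar k}\geq 0$ by orthogonal bisectional curvature (since $e_1\perp e_k$) and $\lambda_k-\lambda_1\geq 0$, so the remaining sum is non-negative. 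Hence $\heat \ell \leq -\lambda_1 R_{1\bar 1}=\varphi(x_0,t_0)\,R_{1\bar 1}$. Finally, $\Ric(g(t))\geq 0$ gives $R_{k\bar k}\geq 0$ for every $k$, so $R_{1\bar 1}\leq \sum_k R_{k\bar k}=\mathcal{R}$, yielding the barrier inequality $\heat \ell \leq \mathcal{R}\varphi$ at $(x_0,t_0)$.

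I expect the main obstacle to be the first step, namely verifying the precise cancellation in the evolution of $\alpha$ between the Ricci-squared terms coming from $-t\Ric$ and the curvature terms in the Bochner identity for $u_{i\bar{j}}$; the K\"ahler index gymnastics and Hermitian symmetrization require care. Once the evolution is in the stated form, the remaining argument is essentially rigid: the decomposition $\lambda_k=\lambda_1+(\lambda_k-\lambda_1)$ is exactly what converts the orthogonal bisectional curvature hypothesis into the required inequality, with $\Ric\geq 0$ playing its minimal role of bounding $R_{1\bar 1}$ by $\mathcal{R}$.
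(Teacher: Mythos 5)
Your proposal is correct and follows essentially the same route as the paper: a parallel (Uhlenbeck-evolved) unitary frame diagonalising $\alpha$ at $(x_0,t_0)$, the barrier $\ell=-\alpha(E_1,\ol{E_1})$, the evolution $\heat\alpha_{i\bar j}=R_{i\bar jk\bar l}\alpha_{l\bar k}$ (which the paper simply quotes from \cite[Lemma 2.33]{ChowBook} rather than re-deriving), and the comparison $-\sum_p R_{1\bar 1p\bar p}\alpha_{p\bar p}\leq -\sum_p R_{1\bar 1p\bar p}\alpha_{1\bar 1}$, which is exactly your decomposition $\lambda_p=\lambda_1+(\lambda_p-\lambda_1)$ combined with $\mathrm{OB}\geq 0$, followed by $R_{1\bar 1}\varphi\leq\mathcal{R}\varphi$ from $\Ric\geq 0$. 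No gaps.
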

\begin{proof}
We follow the exact same argument as Lemma~\ref{lma:evo-S2}. Without loss of generality we consider $\varphi=\varphi_-$. Suppose $\varphi(x_0,t_0)> 0$. We may assume that the $(1,1)$-form $\alpha + \varphi \omega$ has non-trivial kernel at $(x_0,t_0)$. Choosing a locally parallel unitary frame $\{E_i\}_{i=1}^n$ in spacetime around the point $(x_0,t_0)$, such that at $(x_0,t_0)$ it diagonalises $\alpha$, and so that $\alpha(E_1 , \ol{E_1}) = -\varphi(x_0,t_0)$, we can define the function $\ell(x,t) := -\alpha(E_1,\ol{E_1})$ which is a lower barrier to $\varphi$ at $(x_0,t_0)$. Using \cite[Lemma 2.33]{ChowBook} again, we have the desired differential inequality
\begin{equation}
\begin{split}
\heat \ell(x,t)\Big|_{(x_0,t_0)} &=-R_{1 \ol{1} p \ol{p}} \alpha_{p \ol{p}}\\
&\leq -\sum_{p=1}^n R_{1 \ol{1} p \ol{p}} \alpha_{1 \ol{1}}\\
&=R_{1\bar 1} \varphi\leq \mathcal{R}\ell.
\end{split}
\end{equation} 
Here we used that $\a_{1\bar 1}$ is the lowest eigenvalue of $\a$ at $(x_0,t_0)$ in the first inequality, and then that $\mathrm{OB},\Ric\geq 0$. 
\end{proof}

\section{Global solution to the heat equation from the square distance function}
In this section we consider those Ricci flows which start from a metric cone. In contrast to \cite{DSS2024}, we will assume stronger curvature conditions on our flow. Let $(M^n, g(t)), t\in (0,\infty)$ be a smooth complete solution to the Ricci flow such that for some $\a,v_0>0$, the following conditions hold for all $t>0$
\begin{enumerate}
\item[(a)] $|\Rm|\leq \a t^{-1}$; 
\item[(b)] either 
\begin{enumerate}
    \item[(i)] $\Rm(g(t))\in \mathrm{C}_{PIC1}$ or,
    \item[(ii)] $g(t)$ is K\"ahler with $\mathrm{OB}(g(t)),\Ric(g(t))\geq 0$;
\end{enumerate}
\item[(c)] $\mathrm{AVR}(g(t))\geq v_0$.
\end{enumerate} 
By Theorem \ref{thm:Ricciflow-PIC1} we may assume the (conjugate) heat kernel estimates for $g(t)$. We also assume that $g(t)$ comes out of a metric cone $C(X)$ in the pointed Gromov-Hausdorff sense:
\[
(M,d_{g(t)},x_0)\xrightarrow[t \downarrow 0]{\text{pGH}} (C(X),d_c,o),
\]
where $o$ is the vertex of the cone. Following \cite{DSS2024}, we want to consider the heat flow from the square of the distance function on $C(X)$ and use it to construct an obstruction tensor $S$ (see \eqref{obten}) which will ultimately measure the discrepancy between $g(t)$ and a suitable expanding Ricci soliton flow. Thanks to Proposition~\ref{prop:dist} part (ii), we may identify $\left(C(X),d_c,o \right)$ with the pointed metric space $(M,d_0,x_0)$, where $d_0$ is the pointwise limit of the metrics $d_{g(t)}$ as $t \downarrow 0$. In this way, we avoid the technicality of using $d_c$ as initial data. Thoughout this section, we will assume this set-up.

\medskip

Since the Ricci flow is not smooth up to $t=0$, we construct our solution to the heat equation more carefully. In particular, we use a regularization result of Cheeger-Jiang-Naber \cite{CJN21} for the distance function at small times. The following is a modified version of \cite[Proposition 3.1]{DSS2024}. 
\begin{lma}\label{lma:appro-initial}
Under the above setting, there exist $t_i,\e_i \downarrow 0$, $R_i:=\e_i^{-\frac1{n}}\to+\infty$ and $u_{i,0}\in C^\infty\left(B_{g(t_i)}(x_0,2R_i)\right)$ such that 
\begin{enumerate}
    \item[(i)] $$\Delta_{g(t_i)}u_{i,0}=\frac{n}2;$$
    \item[(ii)] $$\sup_{B_{g(t_i)}(x_0,2R_i)}\left|u_{i,0}-\frac{1}{4}d_{g(t_i)}^2(x_0,\cdot)\right|\leq \e_i \leq 1$$
    \item[(iii)] $$\int_{B_{g(t_i)}(x_0,2R_i)}\left| \nabla^{g(t_i),2}u_{i,0}-\frac12 g(t_i)\right|^2_{g(t_i)} \,d\mu_{g(t_i)}\leq C(n,v_0)\e_i  $$
    \item[(iv)]
    $$|\nabla^{g(t_i)}u_{i,0}|_{g(t_i)}(\cdot)\leq C_n(d_{g(t_i)}(x_0,\cdot)+1) \text{  on  } B_{g(t_i)}(x_0,R_i).$$
 
\end{enumerate}
\end{lma}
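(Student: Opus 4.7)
The plan is to construct $u_{i,0}$ in three stages: (A) regularize the conical squared distance $\tfrac14 d_c^2(o, \cdot)$ on $C(X)$ via the sharp Cheeger-Jiang-Naber estimates \cite{CJN21}; (B) transfer and smooth the resulting function onto $(M, g(t_i))$ for appropriately chosen $t_i \downarrow 0$ using the pGH approximation and the heat semigroup; (C) apply a Poisson correction to enforce $\Delta_{g(t_i)} u_{i,0} = n/2$ exactly while preserving the other properties.

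For (A), since $(C(X), d_c)$ arises as the pGH limit of the non-negatively Ricci-curved and uniformly non-collapsed metrics $g(t)$ as $t \downarrow 0$, it is a non-collapsed $\mathrm{RCD}(0, n)$ space. Cheeger-Jiang-Naber's regularization, applied to $d_c(o, \cdot)$ on $B(o, 3R_i) \subset C(X)$ with $R_i = \delta_i^{-1/n}$ for a sequence $\delta_i \downarrow 0$, yields Lipschitz functions $\phi_i$ with $\|\phi_i - \tfrac14 d_c^2(o, \cdot)\|_{L^\infty} \leq \delta_i$, $\Delta \phi_i = n/2$ (weakly), and $\int_{B(o, 3R_i)} |\nabla^2 \phi_i - \tfrac12 g_c|^2\, d\mu_c \leq C(n, v_0) \delta_i$. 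The scaling is natural, as $\mathrm{Vol}(B(o, 3R_i)) \sim R_i^n = \delta_i^{-1}$.

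For (B), Proposition \ref{prop:dist}(ii) lets me choose $t_i \downarrow 0$ fast enough that $d_{g(t_i)}$ is uniformly close to the cone distance on $B_{g(t_i)}(x_0, 3R_i)$; pulling $\phi_i$ back via an $\eta_i$-GH approximation and smoothing briefly by $e^{s_i \Delta_{g(t_i)}}$ with $s_i \ll t_i$ (using the Gaussian heat kernel bounds from Theorem \ref{thm:Ricciflow-PIC1}) gives a smooth $v_i$ satisfying the analogues of (ii)--(iv) up to small errors. For (C), the residual $w_i := \tfrac{n}{2} - \Delta_{g(t_i)} v_i$ then satisfies $\|w_i\|_{L^2(B_{g(t_i)}(x_0, 2R_i+1))} \lesssim \sqrt{\delta_i}$; I would solve the Dirichlet problem $\Delta_{g(t_i)} h_i = w_i$ on $B_{g(t_i)}(x_0, 2R_i+1)$ with $h_i|_\partial = 0$, and set $u_{i,0} := v_i + h_i$ on $B_{g(t_i)}(x_0, 2R_i)$. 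Property (i) is then immediate; (iii) would follow from a Calder\'on-Zygmund $L^2$-Hessian estimate with uniform constants (thanks to $\Ric \geq 0$ and $\mathrm{AVR} \geq v_0$ from Theorem \ref{thm:Ricciflow-PIC1}(IV)), and (iv) from a Bochner/Cheng-Yau style gradient estimate for solutions of $\Delta u = n/2$ combined with (ii).

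The main obstacle I expect is closing property (ii), i.e.\ showing $\|h_i\|_{L^\infty(B(x_0, 2R_i))} = O(\varepsilon_i)$ starting from only $L^2$ control of $w_i$ on a ball of large radius $R_i \sim \delta_i^{-1/n}$: a naive $L^2$-to-$L^\infty$ elliptic bound picks up unfavourable powers of $R_i$. I plan to close this via the Green's function representation of $h_i$ together with the sharp Euclidean-type Sobolev inequality on $(M, g(t_i))$ (available under $\Ric \geq 0$ and $\mathrm{AVR} \geq v_0$), exploiting also the near-cancellation of the source $w_i$ inherited from the CJN equation $\Delta \phi_i = n/2$. A final diagonal extraction, choosing $\delta_i$ small enough relative to all intermediate error constants, then yields the desired sequences $\varepsilon_i, t_i, R_i$ with $\varepsilon_i = C\delta_i$.
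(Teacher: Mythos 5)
Your three-stage strategy --- regularize on the singular cone, transfer to the smooth slice via a Gromov--Hausdorff approximation, then apply a Poisson correction --- has a genuine gap at the transfer step, and it is not the route the paper takes. A Gromov--Hausdorff approximation is a purely metric (in general discontinuous) map; pulling $\phi_i$ back through it and mollifying with $e^{s_i\Delta_{g(t_i)}}$ gives no control on $\nabla^{g(t_i),2}v_i$ in terms of the Hessian bound you established on the cone, so the crucial almost-splitting estimate (iii) --- the whole point of the lemma, since it is what produces the $\e_i^{1/2}$ smallness of the obstruction tensor later --- does not survive stage (B). The same issue undermines stage (C): the claim $\|\tfrac n2-\Delta_{g(t_i)}v_i\|_{L^2}\lesssim\sqrt{\delta_i}$ is unjustified, because the weak identity $\Delta\phi_i=n/2$ on $C(X)$ does not pass through a metric-only approximation to an $L^2$ statement on the smooth slice. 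And even granting both, the $L^\infty$ bound on the correction $h_i$ over a ball of radius $R_i\to\infty$, which you flag yourself, is left open; the Green's function/Sobolev sketch does not obviously overcome the unfavourable powers of $R_i$.

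The paper avoids all of this by never working on the singular cone. The regularization theorems of Cheeger--Jiang--Naber (Theorems 4.22 and 6.3 of \cite{CJN21}) are applied to the smooth time slices directly: by the pointed Gromov--Hausdorff convergence $d_{g(t)}\to d_c$, for every $R,\delta>0$ the rescaled ball $\left(B_{g_R(t)}(x_0,\delta^{-1}),d_{g_R(t)}\right)$ with $g_R(t)=R^{-2}g(t)$ is $(0,\delta^2)$-symmetric once $t$ is small enough, and $g_R(t)$ has $\Ric\ge 0$ and is uniformly non-collapsed. CJN then produces, on the smooth manifold itself, a smooth $h$ on $B_{g_R(t)}(x_0,2)$ with $\Delta_{g_R(t)}h=n/2$ exactly, $\sup\left|h-\tfrac14 d_{g_R(t)}^2(x_0,\cdot)\right|\le\e_i^2$, and average $L^2$-Hessian deficit at most $C(n,v_0)\e_i^2$. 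Setting $u_{i,0}=R^2h$ with $R_i=\e_i^{-1/n}$ converts the average Hessian bound into the stated integral bound (iii) (here your scaling count $\mathrm{Vol}\sim R_i^n=\e_i^{-1}$ is the correct one), property (i) is exact by construction, and (iv) follows from the interior gradient estimate for $\Delta u=n/2$ under $\Ric\ge0$ combined with (ii). If you want to salvage your write-up, replace stages (A)--(C) by this direct application of \cite{CJN21} to $(M,R_i^{-2}g(t_i))$ for a diagonal choice of $t_i\downarrow 0$.
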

\begin{proof}
The result is achieved by applying  \cite[Theorem 6.3]{CJN21} to appropriately rescaled metrics $g_R(t):=R^{-2}g(t)$. Indeed, by our assumption of the pointed Gromov-Hausdorff convergence to $C(X)$, for any $R>0$ and $\delta>0$,
$$\left(B_{g_R(t)}(x_0,\delta^{-1}), d_{g_R(t)} \right) = \left(B_{g(t)}(x_0,R\delta^{-1}), R^{-1} d_{g(t)} \right)$$ is $(0,\delta^2)$-symmetric for sufficiently small $t>0$. So, for any positive sequence $\varepsilon_i \downarrow 0$, 
   one may argue by \cite[Theorem 4.22 and Theorem 6.3]{CJN21} as in the proof of \cite[Proposition 3.1]{DSS2024} to see that for all sufficiently small positive $t$,
   there exists a smooth function $h: B_{g_R(t)}(x_0, 2)\to \R$ such that
   \begin{enumerate}
       \item[(i)'] $\Delta_{g_R(t)}h=\frac{n}{2}$;
       \item[(ii)'] $\sup_{B_{g_R(t)}(x_0,2)}\left|h-\frac{1}{4}d_{g_R(t)}^2(x_0,\cdot)\right|\leq \e_i^2$;
       \item[(iii)'] $\aint_{B_{g_R(t)}(x_0, 2)}\left|\nabla^{g_R(t),2}h-\frac{g_R(t)}{2}\right|^2\,d\mu_{g_R(t)}\leq C(n,v_0)\varepsilon_i^2$.
   \end{enumerate}
   Setting $u = R^2 h$, we can view $u$ as a function on $B_{g(t)}(x_0, 2R)=B_{g_R(t)}(x_0, 2)$, and using the volume comparison, rewrite the above estimates as
   \begin{enumerate}
       \item[(i)] $\Delta_{g(t)}u=\frac{n}{2}$;
       \item[(ii)] $\sup_{B_{g(t)}(x_0, 2R)}\left|u-\frac{1}{4}d_{g(t)}^2(x_0,\cdot)\right|\leq \e_i^2 R^2$;
       \item[(iii)] 
       \begin{eqnarray*}
           \int_{B_{g(t)}(x_0, 2R)}\left|\nabla^{g(t),2}u-\frac{g(t)}{2}\right|^2\,d\mu_{g(t)}&\leq& C(n,v)\varepsilon_i^2 \text{Vol}\left(B_{g(t)}(x_0, 2R)\right)\\
           &\leq& C'(n,v_0)\varepsilon_i^2 R^n.
       \end{eqnarray*}
   \end{enumerate}
 Since $R>0$ is arbitrary, we set $R_i:=\left(1/\varepsilon_i\right)^{\tfrac{1}{n}}\to +\infty$ and choose sufficiently small $t_i$ such that the above estimates hold. The resulting $u_{i,0}:=u$ satisfies (i), (ii) and (iii). Property (iv) follows by applying the interior gradient estimate to (i) and using the non-negative Ricci curvature and (ii).
\end{proof}
\begin{rem}
Although in the previous lemma we are assuming strong non-negativity of the curvature, the result still holds under the weaker assumption that $0\leq \Ric(g(t))\leq \a t^{-1}$ for some $\a>0$.
\end{rem}

\medskip

We want to use the $u_{i,0}$ obtained from Lemma~\ref{lma:appro-initial} to construct a global heat flow on $M\times (0,+\infty)$ with weak initial data $\frac14 d_0^2(\cdot, x_0)$. In order to do so, let $t_i \downarrow 0$ be the sequence of times obtained from Lemma~\ref{lma:appro-initial}, and let $\phi$ be a smooth non-increasing function on $[0,+\infty)$ which vanishes outside $[0,2]$ and is equal to $1$ on $[0,1]$. Then, we define a sequence of approximations $v_i$ to our desired solution via the formula
\begin{equation}\label{intf}
    v_i(x,t):=\int_M K(x,t;y,t_i)\cdot  \phi\left(\frac{d_{g(t_i)}(x_0,y)}{R_i}\right)u_{i,0}(y)\,d\mu_{g(t_i)}(y),
\end{equation}
where $K(x,t;y,s)$ denotes the heat kernel of the Ricci flow on $M\times (0,+\infty)$. Equivalently, $v_i$ is the minimal solution to the heat equation $\heat v_i = 0$ on $M\times [t_i,+\infty)$ with initial data $\phi \cdot u_{i,0}$ at $t=t_i$.

In order to extract our desired limiting solution, we require uniform bounds on the sequence $v_i$ locally in spacetime.
\begin{prop}\label{prop:vinf}
There exists $C_0(n,\a,v_0)>0$ such that the following holds. For any $r>1$, there exists $N(r) \in \mathbb{N}$ such that
{\begin{equation}\label{quab}
    |v_i(x,t)|\leq C_0 r^2, \quad \forall i > N, \quad \forall (x,t)\in B_{d_0}(x_0,r)\times [r^{-2}, r^2].
\end{equation}}
In particular, after passing to a subsequence, $v_i\to 
v_\infty$ in $C^\infty_{loc}\left(M\times (0,+\infty) \right)$ as $i\to+\infty$ for some $v_\infty$ on $M\times (0,+\infty)$ satisfying $\heat v_\infty=0$.
\end{prop}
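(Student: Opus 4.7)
The heart of the statement is the quadratic bound \eqref{quab}; once we have it, the subsequential $C^\infty_{loc}$ convergence follows by standard parabolic regularity. Fix $r>1$ and choose $N=N(r)$ so that for every $i>N$ we have $t_i < \tfrac{1}{10}r^{-2}$ and $R_i > 10 r$. For $x\in B_{d_0}(x_0,r)$ and $t\in[r^{-2},r^2]$, the hypothesis $t\geq 2t_i$ activates the Gaussian bound of Theorem~\ref{thm:Ricciflow-PIC1}, and distance monotonicity (Proposition~\ref{prop:dist}(i)) gives $d_{g(t_i)}(x_0,x)\leq d_0(x_0,x)\leq r$. Also, choosing $R_i$ large compared to $r$ means the cutoff $\phi(d_{g(t_i)}(x_0,\cdot)/R_i)$ is supported where the Gaussian factor is already super-exponentially small at the scales that matter, so we may drop it for the pointwise upper bound.

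The next step is to reduce the problem to bounding moments of the heat kernel. By Lemma~\ref{lma:appro-initial}(ii), $|u_{i,0}(y)|\leq \tfrac14 d_{g(t_i)}^2(x_0,y)+\varepsilon_i$ on the support of $\phi$. Using the elementary inequality $d_{g(t_i)}^2(x_0,y)\leq 2d_{g(t_i)}^2(x_0,x)+2d_{g(t_i)}^2(x,y)\leq 2r^2+2d_{g(t_i)}^2(x,y)$ inside \eqref{intf}, the problem is reduced to showing that
\begin{equation*}
I_k(x,t):=\int_M K(x,t;y,t_i)\,d_{g(t_i)}^{2k}(x,y)\,d\mu_{g(t_i)}(y)\leq C(n,\alpha,v_0)\,t^k,\qquad k=0,1.
\end{equation*}
For $k=0$ this is just the (trivial) upper bound $I_0\leq 1$ coming from stochastic completeness; alternatively, one inserts the Gaussian kernel bound and uses that $\mathrm{Ric}(g(t_i))\geq 0$ to apply Bishop--Gromov volume comparison for $g(t_i)$. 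For $k=1$ the same Gaussian upper bound together with the same volume comparison gives, after slicing by geodesic annuli around $x$, the bound $I_1\leq C\,t$. Plugging back in yields $|v_i(x,t)|\leq C(r^2+t+\varepsilon_i)\leq C_0\,r^2$, proving \eqref{quab}.

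The final paragraph is the compactness argument. On any compact $K\Subset M\times(0,\infty)$ the curvature bound (a) is uniform, so the metrics $g(t)$ have uniformly bounded geometry on $K$. Interior parabolic Schauder estimates applied to the (linear) heat equation $\heat v_i=0$ promote the uniform $L^\infty$ bound from \eqref{quab} to uniform bounds on all space-time derivatives of $v_i$ on slightly smaller compact sets. A diagonal Arzel\`a--Ascoli argument then extracts a subsequence converging in $C^\infty_{loc}$ on $M\times(0,\infty)$ to a limit $v_\infty$, which inherits the heat equation because the $g(t)$ are independent of $i$ and the $v_i$ converge smoothly.

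The main obstacle is the quadratic growth of the initial data: a crude estimate $\|\phi\,u_{i,0}\|_{L^\infty}\lesssim R_i^2\to\infty$ is useless, so one must genuinely exploit the Gaussian decay of $K$ to absorb the $d^2$ weight. The tool that makes this work uniformly in $i$ is the fact that $\mathrm{Ric}(g(t_i))\geq 0$ for every $i$, which yields the $i$-independent volume comparison required for the second-moment estimate.
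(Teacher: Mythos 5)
Your proposal is correct and follows essentially the same route as the paper: bound $\phi\,u_{i,0}$ by $\tfrac14 d_{g(t_i)}^2(x_0,\cdot)+\varepsilon_i$ via Lemma~\ref{lma:appro-initial}(ii), split $d_{g(t_i)}^2(x_0,y)\leq 2r^2+2d_{g(t_i)}^2(x,y)$ using distance monotonicity, and control the second moment of the heat kernel by the Gaussian bound of Theorem~\ref{thm:Ricciflow-PIC1} together with Bishop--Gromov and the co-area formula, then conclude by interior parabolic estimates. (The only quibble is your justification for dropping the cutoff --- $\phi$ equals $1$ near $x_0$, so its support is not far away; one simply uses $0\leq\phi\leq1$ --- but this does not affect the argument.)
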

\begin{proof}
Since $g(t)$ is smooth with $|\Rm(g(t))|\leq \a t^{-1}$ for $t>0$, once we have shown the $L^\infty_{loc}$ estimate \eqref{quab}, the existence of a smooth subsequential limit follows from standard parabolic theory. Moreover, by Lemma~\ref{lma:appro-initial} and \eqref{intf}, we see that $v_i\geq -\varepsilon_i \uparrow 0$ on $M\times [t_i,+\infty)$, with $\e_i,t_i\downarrow 0$. Therefore, when proving \eqref{quab}, it suffices to estimate the $v_i$ from above.

For any large $r>1$, fix $N\in \mathbb{N}$ so that $t_i<\frac12 r^{-2}$ for any $i > N$. Then, for any $(x,t)\in B_{d_0}(x_0,r)\times [r^{-2},r^2]$, since the integral of the heat kernel over $M$ is bounded by $1$, 
 \begin{eqnarray*}
      v_i(x,t)&:=&\int_M K(x,t;y,t_i)\cdot  \phi\left(\frac{d_{g(t_i)}(x_0,y)}{R_i}\right)u_{i,0}(y)\,d\mu_{g(t_i)}(y)\\
      &\leq&\int_{B_{g(t_i)(x_0,2R_i)}} K(x,t;y,t_i)\cdot u_{i,0}(y)\,d\mu_{g(t_i)}(y)\\
      &\leq&\int_{M} K(x,t;y,t_i)\cdot \frac{1}{4}d_{g(t_i)}^2(x_0,y)\,d\mu_{g(t_i)}(y)+\varepsilon_i.
 \end{eqnarray*}

To estimate the above integral, we first observe that by the triangle inequality and Proposition~\ref{prop:dist}, for any $x\in B_{d_0}(x_0,r)\Subset M$ and $y\in M$,
\[
d_{g(t_i)}^2(x_0,y)\leq 2d_{g(t_i)}^2(x_0,x) + 2d_{g(t_i)}^2(x,y)\leq 2r^2 + 2d_{g(t_i)}^2(x,y),
\]
so that 
 \begin{eqnarray*}
 &&\int_{M} K(x,t;y,t_i)\cdot \frac{1}{4}d_{g(t_i)}^2(x_0,y)\,d\mu_{g(t_i)}(y)\\
 &\leq&\frac12 \int_{M} K(x,t;y,t_i)\cdot d_{g(t_i)}^2(x, y)\,d\mu_{g(t_i)}(y)+2r^2\\
 &\leq&\int_0^\infty\frac{C_0'}{t^{n/2}}\text{  exp}\left(-\frac{\tau^2}{C_0 t}\right)\tau^{n+1} \,d\tau+2r^2\\
 &\leq& C_1 (r^2+t),
  \end{eqnarray*}
where we have used the heat kernel estimate from Theorem~\ref{thm:Ricciflow-PIC1}, the co-area formula, and the volume comparison for $g(t_i)$. This completes the proof.
\end{proof}

Given the construction of $v_\infty$ from Proposition~\ref{prop:vinf}, in order to obtain more reasonable convergence to $\frac{1}{4}d_0^2(\cdot,x_0)$ as $t \downarrow 0$, we require higher order estimates on our sequence $v_i$. Such estimates will also enable the use of a localised maximum principle in the proof of the main theorem \ref{thm:main-1}. 
\begin{lma}\label{lma:rough-bdd} 
For any $k\in \mathbb{N}$, there exist $C(n,k,\a)>0$ such that the following holds. For all $r> 1$ there exists $N(r,k) \in \mathbb{N}$ such that
\begin{equation}\label{Shivi}
|\nabla^{k, g_i} v_i|^2(x,t) \leq \frac{C(n,k,\a)}{(t-t_i)^{k-1}}r^{2}, \quad \forall i > N, \quad \forall (x,t) \in B_{g(t_i)}(x_0, r)\times (t_i,t_i+r^2].
\end{equation}
In particular
\begin{equation}\label{init}
\left|v_\infty(x,t)-\frac14 d_0(x,x_0)^2\right|\leq C(n,\a)r\sqrt{t}, \quad \forall t \in (0,\infty),
\end{equation}
where $v_{\infty}$ is the solution constructed in Proposition~\ref{prop:vinf}, so that $v_\infty(\cdot,t)\to \frac14 d_0(\cdot,x_0)^2$  as $t\downarrow 0$ in $C^\b_{loc}(M)$ (with respect to local structure)  for some $\b\in (0,1)$. 
\end{lma}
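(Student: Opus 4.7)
The plan is to establish the Shi-type derivative bounds \eqref{Shivi} via a Bernstein argument for solutions of the heat equation along the Ricci flow, and then to integrate them in time to deduce \eqref{init} and the $C^\beta_{loc}$ convergence. The key inputs are the $L^\infty$ bound $|v_i|\le C_0 r^2$ from Proposition~\ref{prop:vinf} — whose proof in fact extends this bound down to $t=t_i$ on a slightly larger ball, for $i$ sufficiently large — the initial gradient bound $|\nabla^{g(t_i)}u_{i,0}|\le C_n r$ on $B_{g(t_i)}(x_0,r)$ from Lemma~\ref{lma:appro-initial}(iv), and the curvature bound $|\Rm|\le\alpha t^{-1}$, from which Shi's estimates yield $|\nabla^j\Rm|_{g(t)}\le C(j,\alpha)\,t^{-(j+2)/2}$.

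The Bernstein step rests on the identity
\[
\left(\partial_t-\Delta_{g(t)}\right)|\nabla v|^2_{g(t)} = -2|\nabla^2 v|^2_{g(t)},
\]
valid for any solution of $(\partial_t-\Delta_{g(t)})v=0$ under Ricci flow, and its higher-order analogues of the schematic form
\[
\left(\partial_t-\Delta_{g(t)}\right)|\nabla^k v|^2 \le -2|\nabla^{k+1}v|^2 + C_k\sum_{j+\ell=k}|\nabla^{j}\Rm|\,|\nabla^\ell v|\,|\nabla^k v|.
\]
Applying the parabolic maximum principle to cutoff quantities of the form $Q_k := \eta^{2k}(t-t_i)^{k-1}|\nabla^k v_i|^2 + (\text{lower-order correction})$, with $\eta$ a spatial cutoff supported in $B_{g(t_i)}(x_0, 2r)$, and iterating in $k$, produces \eqref{Shivi}. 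The $k=1$ case is handled directly, without the $(t-t_i)^{k-1}$ weight, by appealing to the initial gradient bound from Lemma~\ref{lma:appro-initial}(iv).

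The estimate \eqref{init} then follows by applying the $k=2$ case of \eqref{Shivi} to bound $|\partial_t v_i|=|\Delta_{g(t)}v_i|\le Cr/\sqrt{t-t_i}$ and integrating in time to obtain $|v_i(x,t)-u_{i,0}(x)|\le Cr\sqrt{t-t_i}$; combining this with Lemma~\ref{lma:appro-initial}(ii), the distance comparison in Proposition~\ref{prop:dist}, and the $C^\infty_{loc}$ convergence $v_i\to v_\infty$ from Proposition~\ref{prop:vinf}, one passes to the limit $i\to\infty$. For the $C^\beta_{loc}$ assertion, the $k=1$ case of \eqref{Shivi} passes to the limit as a uniform Lipschitz bound on $v_\infty(\cdot, t)$ with respect to $d_{g(t)}$, hence (by Proposition~\ref{prop:dist}(i)) with respect to $d_0$; since $\tfrac14 d_0^2(x_0,\cdot)$ is likewise Lipschitz in $d_0$, interpolating this uniform $C^{0,1}$ control against the $C^0$-convergence \eqref{init} yields $C^\beta_{loc}$ convergence for every $\beta\in(0,1)$.

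The principal technical obstacle is constructing cutoffs $\eta$ compatible with the facts that $u_{i,0}$ is defined only on $B_{g(t_i)}(x_0, 2R_i)$ and that the metrics $g(t)$ degenerate as $t\downarrow 0$. This is handled by fixing $r>1$ first and choosing $i$ so large that $R_i\gg r$ and Proposition~\ref{prop:dist}(i) ensures $B_{d_0}(x_0,r)\subset B_{g(t_i)}(x_0, r+1)$: on the fixed time window $[t_i, r^2]$, the curvature bound $|\Rm|\le\alpha t^{-1}$ together with a parabolic Laplacian comparison furnish smooth cutoffs whose derivatives are controlled uniformly in $i$, making the Bernstein iteration otherwise standard.
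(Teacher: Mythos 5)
Your proposal is correct and follows essentially the same route as the paper: a Bernstein--Shi iteration with cutoff functions, seeded by the initial gradient bound of Lemma~\ref{lma:appro-initial}(iv) and Shi's curvature derivative estimates, followed by time-integration of $|\partial_t v_i|=|\Delta_{g(t)}v_i|$ and the distance-distortion estimates of Proposition~\ref{prop:dist}; the paper merely packages the $r$-dependence by parabolically rescaling to unit scale before running the same argument. The one caveat is the final H\"older step: the lemma asks for $C^\b_{loc}$ convergence with respect to the smooth local structure rather than $d_0$, so the uniform Lipschitz bound should be converted using Proposition~\ref{prop:dist}(iii) (not only part (i)), which yields the conclusion for some $\b=\b(n,\a)\in(0,1)$ rather than for every $\b<1$.
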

\begin{proof}
Recall that, thanks to our choice of cutoff function, $v_i$ is a solution to the heat equation with $v_i(\cdot,t_i)=u_{i,0}(\cdot)$ on $B_{g(t_i)}(x_0,R_i)$. Fix $r>1$ and consider the parabolic rescalings 
$$\tilde g_i(t)=r^{-2}g(t_i+r^2t), \quad \tilde v_i(\cdot,t)=r^{-2}v_i(\cdot,t_i+r^2 t).$$
Note that $\tilde v_i$ is a solution to the heat equation $\left(\frac{\partial}{\partial t} - \Delta_{\tilde g_i(t)}\right) \tilde v_i = 0$ with initial data $\tilde v_i(\cdot,0) = r^{-2}u_{i,0}(\cdot)$ on $B_{\tilde g_i(0)}(x_0,R_i r^{-1})$, and $|\Rm(\tilde g_i(t))|\leq \a t^{-1}$ on $M\times (0,1]$. Moreover, it follows from Shi's estimate that for all $k\in \mathbb{N}$, there exists $C(n,k,\a)>0$ such that for all $(x,t)\in M\times (0,1]$, $$|\nabla^{\tilde g_i(t),k}\Rm(\tilde g_i(t))|_{\tilde g_i(t)}\leq C(n,k,\a)t^{-(k/2+1)}.$$

Thanks to the local gradient estimates for $u_{i,0}$ in Lemma \ref{lma:appro-initial}, we have uniform gradient bounds for the $\tilde v_i$ on $B_{\tilde g_i(0)}(x_0,1)\times [0,1]$. Using the upper bound of the curvature and its derivatives, we may employ a standard argument for Ricci flows which uses the Bernstein-Shi trick and a Perelman type cutoff function to establish that 
\begin{equation}\label{stdshi}
|\nabla^{k,\tilde g_i} \tilde v_i|^2 \leq \frac{C(n,k,\a)}{t^{k-1}},
\end{equation}
for all $k\in \mathbb{N}$ on $B_{\tilde g_i(0)}(x_0,\frac12)\times (0,1]$. This is equivalent to \eqref{Shivi} under our rescaling. Furthermore, as 
\begin{equation}\label{eqn:local-holder-time}
|\partial_t v_i|=|\Delta_{g(t)}v_i| \leq C(n,\a)r (t-t_i)^{-1/2} , \quad \forall t \in (t_i,t_i+r^2],
\end{equation}
by integrating in time we obtain \eqref{init}.

It remains to show that $v_i\to v_\infty$ in $C^\b_{loc}(M\times [0,+\infty))$ for some $\b \in (0,1)$. Fix a smooth reference metric $h=g(1)$. By the gradient estimate \eqref{Shivi} with $k=1$, and the fact that $g(t)\leq g(t_i)$ for $t\geq t_i$ from $\Ric \geq 0$, we have 
\begin{equation}\label{eqn:local-holder}
    \left| v_i(x,t)-v_i(y,t)\right|\leq C(n,\a)r\cdot  d_{g(t_i)}(x,y),
\end{equation}
for all $x,y\in B_{g(t_i)}(x_0,r)$ and $t\in  [t_i,t_i+r^2]$. It follows from Proposition~\ref{prop:dist} that there exists $\b>0$ such that for any $r>1$, $v_i$ is uniformly $\b$-H\"older continuous with respect to the metric $h$ on $B_{h}(x_0,r)\times [t_i,t_i+r^2]$ as $i\to+\infty$. The result follows. 
\end{proof}
\begin{rem}
In fact, it follows from \eqref{eqn:local-holder-time} and \eqref{eqn:local-holder} that there exists $\b(n,\a)>0$ so that $$v_\infty\in C^\infty_{loc}\left(M\times (0,+\infty)\right)\cap C^{\b,\b/2}_{loc}\left(M\times [0,+\infty) \right),$$
where the parabolic H\"older regularity is defined using local charts on $M$ (or equivalently via a smooth metric $g(t_0)$) instead of using the rough distance metric $d_0$ on $M$, so that the regularity is with respect to the smooth structure of $M$. 
\end{rem}

\section{Ricci flows out of metric cones}
Now we are ready to prove Theorem~\ref{thm:main-1}. We begin with the Riemannian case.  More precisely, we show that if a Ricci flow coming out of a metric cone has non-negative $1$-isotropic curvature and Euclidean volume growth, then it must be an expanding Ricci soliton.
\begin{thm}\label{thm:Rie-case} 
Suppose $(M^n,g(t))$ is a complete non-compact Ricci flow on $M\times (0,+\infty)$ such that for some $v_0,\a>0$, the following properties hold for all $t>0$
\begin{enumerate}
    \item[(a)] $\mathrm{Rm}(g(t))\in \mathrm{C}_{PIC1}$;
    \item[(b)] $|\Rm(g(t))|\leq \a t^{-1}$;
    \item[(c)] $\mathrm{AVR}(g(t))\geq v_0>0$.
\end{enumerate}
Suppose further that $(M,d_0,x_0)$ is isometric to a metric cone $\left( C(X),d_c,o\right)$, where $d_0$ is the well-defined metric on $M$ given by the pointwise limit of $d_{g(t)}$ as $t \downarrow 0$ (see Proposition \ref{prop:dist}). Then there exists a smooth function $u$ on $M\times (0,+\infty)$ such that 
\begin{enumerate}
    \item[(i)] $2t\,\Ric(g(t))+g(t)-2\nabla^2 u=0$ on $M\times (0,+\infty)$;
    \item[(ii)] $u(\cdot,t)\to \frac14 d_0(x_0,\cdot)^2$ in $C^\b_{loc}(M)$ for some $\b\in (0,1)$ as $t \downarrow 0$.
\end{enumerate}
\end{thm}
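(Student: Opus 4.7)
The plan is to take $u:=v_\infty$ from Proposition~\ref{prop:vinf} as the candidate soliton potential. By construction $u\in C^\infty(M\times(0,\infty))$ solves $\heat u=0$, and Lemma~\ref{lma:rough-bdd} gives $u(\cdot,t)\to\tfrac14 d_0^2(x_0,\cdot)$ in $C^\beta_{loc}(M)$ as $t\downarrow 0$, which is precisely assertion~(ii). The task is therefore to show that the soliton obstruction tensor
\[
S(t) := -2t\,\Ric(g(t))-g(t)+2\nabla^2 u
\]
vanishes identically on $M\times(0,\infty)$.

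By Lemma~\ref{lma:linear-alge}, $S\equiv 0$ is equivalent to $S\owedge g\equiv 0$, which reduces to showing that the two eigenvalue functions $\varphi_\pm$ from \eqref{defn:min-max-S2} vanish. Lemma~\ref{lma:evo-S2} supplies the key differential inequality $\heat\varphi_\pm\le\mathcal{R}\,\varphi_\pm$ in the barrier sense wherever $\varphi_\pm\ge 0$, while Shi-type estimates applied to the approximating sequence $v_i$ (cf.\ Lemma~\ref{lma:rough-bdd}) together with $|\Rm|\le\alpha/t$ yield the a priori bounds $\mathcal{R},\varphi_\pm\le C(n,\alpha,r)/t$ on every compact region $B_{d_0}(x_0,r)\times(0,r^2]$, which is the form required for the localized maximum principle of Theorem~\ref{thm:Local-MP}.

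With these in hand, I would apply Theorem~\ref{thm:Local-MP} at any fixed $p\in M$ on the time-shifted flow $\tilde g(s):=g(\tau+s)$ for small $\tau>0$, correcting for the generally nonzero initial value by introducing a spatially-constant time-barrier $\eta_\tau(s)$ satisfying $\partial_s\eta_\tau\ge\sup_{B_{g(\tau)}(p,2)}\mathcal{R}(\cdot,\tau+s)\cdot\eta_\tau$ with $\eta_\tau(0)=\epsilon_\tau:=\sup_{B_{g(\tau)}(p,2)}\varphi_\pm(\cdot,\tau)$. Then $\psi:=\varphi_\pm(\cdot,\tau+s)-\eta_\tau(s)$ is non-positive at $s=0$ on $B_{g(\tau)}(p,2)$ and satisfies $\heat\psi\le\mathcal{R}\,\psi$ whenever $\psi\ge 0$, so Theorem~\ref{thm:Local-MP} yields
\[
\varphi_\pm(p,\tau+s)\le s^\ell+\eta_\tau(s), \qquad s\in[0,\hat T],
\]
for every $\ell>\alpha+1$. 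Sending $\tau\downarrow 0$ with $\epsilon_\tau\downarrow 0$ at a sufficient rate makes $\eta_\tau(s)\to 0$, so that $\varphi_\pm(p,s)$ decays faster than any polynomial as $s\downarrow 0$. Combined with the linear evolution $\heat S=\Rm\star S$ from the proof of Lemma~\ref{lma:evo-S2} and a standard parabolic uniqueness/maximum principle argument on $M\times[\delta,\infty)$ as $\delta\downarrow 0$, this forces $S\equiv 0$ on $M\times(0,\infty)$.

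The main obstacle is therefore establishing the decay rate of $\epsilon_\tau$ as $\tau\downarrow 0$, uniformly on compact sets. The starting point is the $L^2$-Hessian closeness of Lemma~\ref{lma:appro-initial}(iii): the obstruction tensor $S_i$ associated with the $i$-th approximation splits at time $t_i$ as
\[
S_i(\cdot,t_i)=2\big(\nabla^2 u_{i,0}-\tfrac12 g(t_i)\big)-2t_i\,\Ric(g(t_i)),
\]
an $L^2$-small piece plus a pointwise-bounded residual of order $2\alpha$ that is concentrated near the Ricci singularity of the cone. To convert this integral smallness into pointwise smallness at a later time $\tau$, I would use the evolution identity $\heat S_i=\Rm\star S_i$ from the proof of Lemma~\ref{lma:evo-S2} together with a Duhamel representation against the Bamler-Cabezas-Wilking Gaussian heat kernel estimate of Theorem~\ref{thm:Ricciflow-PIC1}, using Proposition~\ref{prop:dist} to interchange $d_{g(t)}$ and $d_0$. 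Passing to the limit $i\to\infty$ through the smooth convergence $v_i\to v_\infty$ then delivers the required uniform decay of $\epsilon_\tau$. The delicate interplay between the Gaussian decay of the heat kernel and the potentially singular near-tip Ricci contribution is precisely the step where the Bamler-Cabezas-Wilking heat kernel estimate and the localized nature of Theorem~\ref{thm:Local-MP} become indispensable in reaching up to the tips of the cone.
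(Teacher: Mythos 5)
Your overall skeleton matches the paper's: take $u:=v_\infty$ from Proposition~\ref{prop:vinf}, obtain (ii) from Lemma~\ref{lma:rough-bdd}, and kill the eigenvalue functions $\varphi_\pm$ of $S\owedge g$ using Lemma~\ref{lma:evo-S2}, Lemma~\ref{lma:linear-alge} and the localized maximum principle. However, the step you yourself flag as ``the main obstacle'' is exactly where the argument breaks, and the proposed fix does not work as stated. First, your constant-in-space barrier $\eta_\tau$ needs $\epsilon_\tau=\sup_{B}\varphi_\pm(\cdot,\tau)$ to be small and to decay at a polynomial rate in $\tau$: integrating $\partial_s\log\eta_\tau\ge \sup\mathcal R\sim \alpha/(\tau+s)$ forces $\eta_\tau(s)\gtrsim \epsilon_\tau\,(s/\tau)^{c\alpha}$, so at fixed $s$ you need $\epsilon_\tau=o(\tau^{c\alpha})$. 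But Lemma~\ref{lma:appro-initial}(iii) only provides $L^2$ smallness of the Hessian discrepancy; no pointwise smallness of $\varphi_\pm$ near $t=0$ is available at all. Second, your splitting of $S_i(\cdot,t_i)$ retains the term $-2t_i\Ric(g(t_i))$, which is merely bounded by $2\alpha$ pointwise --- it is not small, it is not small in $L^2$ on $B(x_0,2R_i)$ either, and there is no reason it is ``concentrated near the tip.'' The paper removes it entirely by translating time by $t_i$, so that the obstruction tensor of the $i$-th approximation at its new initial slice $t=0$ is exactly $2\bigl(\nabla^2 u_{i,0}-\tfrac12 g(t_i)\bigr)$, the $L^2$-small quantity. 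Third, the Duhamel representation you propose for $S_i$ has reaction term $\Rm\star S_i$ with coefficient of size $t^{-1}$, which is not integrable down to $t=0$, so a Gronwall argument does not close.

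The paper's device that replaces all of this is a \emph{spatially varying} barrier: $\psi_i(x,t)=\int_M G_i(x,t;y,0)\,\phi(\cdot)\,\varphi_i(y,0)\,d\mu_{g_i(0)}$, where $G_i$ is the conjugate heat kernel. Since $G_i$ is the kernel of $\partial_t-\Delta-\mathcal R$, the function $\psi_i$ is an exact supersolution matching the reaction term in Lemma~\ref{lma:evo-S2}, so $\Psi_i=\varphi_i-\psi_i$ has nonpositive initial data on $B_{g_i(0)}(x_0,R_i/2)$ and Theorem~\ref{thm:Local-MP} applies directly after rescaling by $r$, yielding $\Psi_i\le r^{-2}t$. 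Separately, Cauchy--Schwarz together with the Gaussian bound of Theorem~\ref{thm:Ricciflow-PIC1} converts the $L^2$ bound on $\varphi_i(\cdot,0)$ into the pointwise bound $\psi_i\le C\varepsilon_i^{1/2}t^{-n/4}$. Summing and letting $i\to\infty$, then $r\to\infty$, gives $S\owedge g\ge 0$ (and symmetrically $\le 0$) directly; no further ``parabolic uniqueness on $M\times[\delta,\infty)$'' is needed, nor would it be justified without decay of $S$ at spatial infinity. To repair your proof you would need to incorporate this conjugate-heat-kernel comparison, or an equivalent mechanism converting $L^2$ initial smallness into pointwise smallness compatible with the reaction term $\mathcal R\varphi$, together with the time translation that eliminates the Ricci residual at the initial slice.
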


\begin{rem}
As in the proof of Theorem~\ref{thm:Ricciflow-PIC1}, if the Ricci flow in Theorem~\ref{thm:Rie-case} is only known to exist for a short-time, then it can be extended to all $t>0$ with the same properties. 
\end{rem}

\begin{proof}
Letting $u:=v_\infty$ be the function obtained in Proposition~\ref{prop:vinf}, (ii) follows immediately from Lemma~\ref{lma:rough-bdd}. In order to prove (i), it is more convenient to translate our sequence of approximations in time by $t_i$, i.e. replace $g_i(t)$ by $g(t_i+t)$ and $v_i(t)$ by $v_i(t_i+t)$. We note that since $t_i\downarrow 0$, these translated approximations still converge locally smoothly to $v_\infty$, and thus it suffices to estimate the translated tensors
$$S_i(t) := -2t\Ric(g_i(t))-g_i(t)+2\nabla^{2,g_i(t)} v_i(t).$$ 

We let $\varphi_i \geq 0$ be the negative part of the lowest eigenvalue of $S_i\owedge g_i$ with respect to the cone of non-negative curvature operators as in \eqref{defn:min-max-S2}. If $G_i(x,t;y,s)$ denotes the conjugate heat kernel for the Ricci flow $g_i(t)$, define the comparison function 
$$
\psi_i(x,t) := \int_{M} G_i(x,t;y,0)\, \phi\left( \frac{d_{g_i(0)}(x_0,y)}{R_i}\right)\varphi_i(y,0)\,d\mu_{g_i(0)}(y) \geq 0,$$
where $\phi$ is a smooth non-increasing function on $[0,+\infty]$ which is identically $1$ on $[0,\tfrac{1}{2}]$ and vanishes outside of $[0,1]$. Thus, $\psi_i(\cdot,0) = \varphi_i(\cdot,0)$ on $B_{g_i(0)}(x_0, R_i/2)$, but more importantly, Lemma~\ref{lma:evo-S2} implies that $\Psi_i := \varphi_i-\psi_i$ satisfies 
\begin{equation}
    \left(\frac{\partial}{\partial t}-\Delta_{g_i(t)}\right) \Psi_i\leq \mathcal{R}_i \cdot \Psi_i,
\end{equation}
whenever $\Psi_i \geq 0$ in the barrier sense. For convenience, we now work on the re-scaled picture. For each sufficiently large $r>1$, we consider 
$$\tilde g_i(t)=r^{-2}g_i(r^2t), \quad \tilde v_i(\cdot,t)=r^{-2}v_i(\cdot,r^2 t), \quad \tilde S_i(t)=r^{-2}S_i(r^2t),$$
and the corresponding $\tilde\varphi_i=\varphi_i(\cdot, r^2t)$, $\tilde\psi_i=\psi_i(\cdot, r^2t)$ and $\tilde \Psi_i=\Psi_i(\cdot, r^2t)$.  It follows from Lemma~\ref{lma:rough-bdd} that
$$\tilde\Psi_i \leq \tilde\varphi_i\leq C(n,\a)t^{-1/2} \textnormal{   on   } B_{\tilde g_i(0)}(x_0,1) \times (0,1],$$
and therefore, we may apply Theorem~\ref{thm:Local-MP} to $\tilde \Psi_i$ to conclude that there exists $\hat T(n,\a) \in (0,1]$ such that $\tilde\Psi_i(x,t)\leq t$ on $B_{\tilde g_i(0)}(x_0,1/2)\times (0,\hat T]$. Rescaling back, we have shown that
\begin{equation}\label{eqn:Psi}
    \Psi_i(x,t)\leq r^{-2}t, \quad \forall (x,t)\in B_{g_i(0)}(x_0, r/2)\times (0,\hat T r^2].
\end{equation}
Next, we estimate $\psi_i$ from above. Using Lemma~\ref{lma:appro-initial} part (iii) and the fact that 
$$\varphi_i(\cdot,0) \leq 2 \cdot |S_i(0)|_{g_i(0)} = 4 \cdot \left| \nabla^{2,g_i(0)}v_i - g_i(0)/2\right|_{g_i(0)},$$ 
we see that
\begin{equation*}
\begin{split}
\psi_i(x,t)
    &\leq \left(\int_{B_{g_i(0)}(x_0,R_i)} \varphi_i^2\,d\mu_{g_i(0)}\right)^{1/2} \left(\int_{B_{g_i(0)}(x_0,R_i)} G_i^2(x,t;y,0)\,d\mu_{g_i(0)}(y) \right)^{1/2}\\
    &\leq  C(n,v_0) \cdot \e_i^{\frac12} \cdot \left(\int_{B_{g_i(0)}(x_0,2R_i)} G_i^2(x,t;y,0)\,d\mu_{g_i(0)}(y) \right)^{1/2}.
\end{split}
\end{equation*}
By the heat kernel estimate from Theorem~\ref{thm:Ricciflow-PIC1}, the co-area formula and volume comparison, for all $t>0$ we have
\begin{equation*}
    \begin{split}
        &\int_{B_{g_i(0)}(x_0,2R_i)} G_i^2(x,t;y,0) \,d\mu_{g_i(0)}(y) \\
        &\leq \frac{C_0(n,v_0,\a)}{t^{n/2}}\int_{M} G_i(x,t;y,0)\,d\mu_{g_i(0)}(y)\\
        &=\frac{C_0'}{t^{n/2}}\int_{0}^\infty \frac{r^{n-1}}{t^{n/2}}\exp\left( -\frac{r^2}{C_0 t}\right) dr \leq \frac{C_1(n,v,\a)}{t^{n/2}},
    \end{split}
\end{equation*}
and therefore
\begin{equation}\label{eqn:psi}
\psi_i(x,t) \leq C(n,v_0,\a) \cdot \e_i^{\frac12} \cdot t^{-n/4}.
\end{equation}

We combine  \eqref{eqn:Psi} and \eqref{eqn:psi}
to conclude that, for all $(x,t)\in M\times (0,+\infty)$, the inequality
\begin{equation*}
    \varphi_i(x,t) = \Psi_i(x,t) + \psi_i(x,t) \leq \frac{C(n,\a,v_0)}{t^{n/4}} \e_i^{\frac12} +r^{-2}t
\end{equation*}
holds for sufficiently large $i$  and $r>1$. Since $v_i\to v_\infty$ and $g_i\to g$ locally uniformly smoothly on $M\times (0,+\infty)$,  by letting $i\to +\infty$, followed by $r\to +\infty$, we conclude that $S\owedge g\geq 0$ on $M\times (0,+\infty)$. By repeating the argument with the positive part of the largest eigenvalue of $S_i\owedge g_i$ with respect to the cone of non-positive curvature operators, we deduce that $S\owedge g\equiv 0$ and hence $S\equiv 0$ on $M\times (0,+\infty)$ by Lemma~\ref{lma:linear-alge}. This completes the proof. 
\end{proof}

Theorem~\ref{thm:main-1} now follows from Theorem~\ref{thm:Rie-case} and Theorem~\ref{thm:Ricciflow-PIC1}.
\begin{proof}[Proof of Theorem~\ref{thm:main-1}]
If $(C(X),d_c,o)$ is a metric cone at infinity of $(M,g_0)$, there exists $x_0 \in M$ and $R_i \to +\infty$ such that $(C(X),d_c,o)$ is the pointed Gromov-Hausdorff limit of $(M,R_i^{-2}g_0,x_0)$. By applying Theorem~\ref{thm:Ricciflow-PIC1} to each $g_{i,0}=R_i^{-2}g_0$, we obtain a sequence of immortal Ricci flows $(M,g_i(t),x_0),t\in [0,+\infty)$. The existence of $(M_\infty,g(t),x_\infty)$ with properties (a)-(d) follows from Hamilton's compactness theorem \cite{Hamilton-compact} and Proposition~\ref{prop:dist}. The existence of $u$ with properties (e)-(f) follows from Theorem~\ref{thm:Rie-case}.
\end{proof}

\medskip
As an application, we give an alternative proof to \cite[Theorem 1.3]{DSS2024}.
\begin{cor}\label{cor:pinching}
Suppose $(M^n,g_0)$ is a complete non-compact manifold with dimension $n\geq 3$ and of Euclidean volume growth such that it is PIC1 pinched in the following sense:
\begin{equation}\label{eqn:PIC1 pinched}
\mathrm{Rm}(g_0)-\e_0\cdot \mathcal{R} g_0\owedge g_0\in \mathrm{C}_{PIC1},
\end{equation}
for some small $\e_0 \in (0,\frac{1}{2n(n-1)})$. Then $(M,g_0)$ is isometric to flat Euclidean space.
\end{cor}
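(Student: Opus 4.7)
The plan is to use Theorem~\ref{thm:Rie-case} to reduce the problem to rigidity for PIC1-pinched expanding gradient Ricci solitons with Euclidean volume growth, and then invoke the Bishop--Gromov rigidity. Fix any metric cone at infinity $(C(X),d_c,o)$ of $(M,g_0)$, realised as the pointed Gromov--Hausdorff limit of $(M,R_i^{-2}g_0,x_0)$ for some $R_i\to+\infty$. The pinching condition \eqref{eqn:PIC1 pinched} is scale invariant, so each rescaling $R_i^{-2}g_0$ lies in the same pinching cone
\begin{equation*}
\mathcal{C}_{\varepsilon_0} := \big\{ R :\ R - \varepsilon_0\,\mathrm{tr}(R)\,g\owedge g \in \mathrm{C}_{PIC1} \big\}.
\end{equation*}
This cone is Hamilton-ODE invariant (it is the invariant cone behind the Brendle--Schoen rigidity results and is used in \cite{DSS2024}), hence each smoothing $g_i(t)$ produced by Theorem~\ref{thm:Ricciflow-PIC1} lies in $\mathcal{C}_{\varepsilon_0}$ for every $t>0$. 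A Hamilton compactness argument together with Theorem~\ref{thm:main-1} yields an expanding gradient Ricci soliton $(M_\infty,g(t),u)$ coming out of $C(X)$ satisfying $\mathrm{Rm}(g(t)) - \varepsilon_0\,\mathcal{R}(g(t))\,g(t)\owedge g(t) \in \mathrm{C}_{PIC1}$ for every $t>0$.

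\textbf{Step 2 (a pinched expander is flat).} Set $f := u(\cdot,1)$ on $(M_\infty,g(1))$; the soliton equation becomes $\Ric + \tfrac{1}{2}g = \nabla^2 f$, and tracing and differentiating yields the standard identities
\begin{equation*}
\Delta f = \mathcal{R} + \tfrac{n}{2}, \qquad \nabla \mathcal{R} = 2\,\Ric(\nabla f), \qquad |\nabla f|^2 + \mathcal{R} = f + C,
\end{equation*}
together with the scalar evolution equation
\begin{equation*}
\Delta_f \mathcal{R} + \mathcal{R} + 2|\Ric|^2 = 0, \qquad \Delta_f := \Delta - \nabla f\cdot\nabla.
\end{equation*}
The pinching gives $\Ric\ge 0$ hence $\mathcal{R}\ge 0$, while the curvature bound $|\Rm(g(1))|\le\alpha$ combined with the third identity forces $f$ to grow quadratically at infinity, so that $e^{-f}$ decays as a Gaussian. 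Writing $e^{-f}\Delta_f\mathcal{R} = \mathrm{div}(e^{-f}\nabla\mathcal{R})$ and integrating over sublevel sets $\{f\le c\}$, the boundary term decays exponentially in $c$ (using the polynomial growth of $\mathrm{Area}(\{f=c\})$ and the Shi bound on $|\nabla\mathcal{R}|$), so passing to the limit yields
\begin{equation*}
\int_{M_\infty} \big(\mathcal{R} + 2|\Ric|^2\big)\, e^{-f}\, d\mu_{g(1)} = 0.
\end{equation*}
Non-negativity forces $\mathcal{R}\equiv 0$; the pinching then gives $\mathrm{Rm}\equiv 0$, and the soliton equation degenerates to $g(1) = 2\nabla^2 u$, whence $(M_\infty,g(1),u)\cong(\mathbb{R}^n,g_{\mathrm{Euc}},|x|^2/4)$.

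\textbf{Step 3 (rigidity of $(M,g_0)$).} By property (c) of Theorem~\ref{thm:main-1}, the cone $(C(X),d_c,o)$ is isometric to $(\mathbb{R}^n,d_{\mathrm{Euc}},0)$. Since this holds for every metric cone at infinity of $(M,g_0)$, we conclude $\mathrm{AVR}(g_0)=1$. As $\mathrm{Rm}(g_0)\in\mathrm{C}_{PIC1}$ implies $\Ric(g_0)\ge 0$ (noted just after the definition of $\mathrm{C}_{PIC1}$), the Bishop--Gromov monotonicity then forces the volume ratio $\mathrm{Vol}_{g_0}(B_{g_0}(x_0,r))/(\omega_n r^n)$ to be identically $1$ for all $r>0$, and the corresponding equality case yields $(M,g_0)\cong(\mathbb{R}^n,g_{\mathrm{Euc}})$.

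\textbf{Main obstacle.} The two delicate points are (i) verifying that the pinching cone $\mathcal{C}_{\varepsilon_0}$ is preserved by the smoothings of Theorem~\ref{thm:Ricciflow-PIC1} and survives the non-compact Hamilton limit, and (ii) the cutoff argument justifying integration by parts against the Gaussian weight $e^{-f}$ on the non-compact expander. For (i), one uses that the pinching is closed, scale invariant, and ODE invariant, combined with Shi-type estimates to pass to the limit. For (ii), the essential inputs are the quadratic growth of $f$ coming from $|\nabla f|^2 + \mathcal{R} = f + C$ and the uniform boundedness of $\mathcal{R}$ and $|\nabla\mathcal{R}|$ on $(M_\infty,g(1))$ (from $|\Rm|\le\alpha$ and Shi's derivative estimates).
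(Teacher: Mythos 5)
Your Steps 1 and 3 track the paper's overall strategy: the paper obtains the uniform pinching along the smoothing flows by invoking the existence theory of \cite{LeeToppingPIC1} (rather than asserting ODE-invariance of the naive cone $\mathcal{C}_{\e_0}$ --- the pinching constant is allowed to degrade slightly), and it concludes exactly as you do, via $\mathrm{AVR}=1$ and volume rigidity. The decisive gap is in Step 2, your claimed rigidity of the pinched expander.

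First, the signs are wrong for an expander. With the paper's normalisation $\nabla^2 f=\Ric+\tfrac12 g$ one has $\nabla\mathcal{R}=-2\Ric(\nabla f)$ and
\[
\Delta\mathcal{R}+\langle\nabla f,\nabla\mathcal{R}\rangle+\mathcal{R}+2|\Ric|^2=0,
\]
so the divergence structure is $\mathrm{div}\left(e^{f}\nabla\mathcal{R}\right)=-e^{f}\left(\mathcal{R}+2|\Ric|^2\right)$: the natural weight is $e^{+f}\sim e^{+d^2/4}$, not the Gaussian $e^{-f}$ (that is the shrinker situation). If you force the weight $e^{-f}$ you pick up the uncontrolled term $-2e^{-f}\langle\nabla f,\nabla\mathcal{R}\rangle$ with no sign; if you use the correct weight, the boundary term $e^{c}\int_{\{f=c\}}\partial_\nu\mathcal{R}$ does not vanish as $c\to\infty$. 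Second, and more fundamentally: your Step 2 uses the pinching hypothesis only to deduce $\Ric\ge0$, so if it were correct it would show that \emph{every} expanding gradient soliton with $\Ric\ge 0$, bounded curvature, quadratically growing potential and Euclidean volume growth is flat. Bryant's rotationally symmetric expanders on $\R^n$ (non-negative curvature operator, hence weakly PIC1, asymptotic to non-flat cones) satisfy all of these hypotheses and are not flat, so no argument of this form can close. The pinching must enter quantitatively: the paper uses it, via the ODE argument of \cite[Proposition 1.8]{Deruelle2017}, to show that $\mathcal{R}$ decays exponentially at spatial infinity, upgrades this to exponential decay of the full curvature tensor using \cite[Lemma A.2]{LeeToppingPIC1}, and only then invokes the gap theorem for expanders of \cite{ChanLee2023} to conclude flatness. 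You would need to replace your integral identity with this (or an equivalent) decay-plus-gap argument.
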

\begin{proof}
By our choice of $\e_0$, \eqref{eqn:PIC1 pinched} implies that $\mathcal{R} \geq 0$ and $\mathrm{Rm}(g_0) \in \mathrm{C}_{PIC1}$. By Gromov compactness and the celebrated work of Cheeger-Colding \cite{CheegerColding}, there exists $R_i\to+\infty$ and a metric cone $C(X)$ such that $(M_i,g_i,x_i)=(M,R_i^{-2}g_0,x_0)$ converges to $C(X)$ in the pointed Gromov-Hausdorff sense as $i\to+\infty$. We will follow the proof of Theorem~\ref{thm:main-1}. For the sake of convenience, we instead apply the existence theory in \cite{LeeToppingPIC1}. Since \eqref{eqn:PIC1 pinched} is scaling invariant, it follows from \cite[Theorem 1.3]{LeeToppingPIC1} that there exists a sequence of immortal Ricci flows $g_i(t)$ on $M$ with $g_i(0)=R_i^{-2}g_0$ that is PIC1 pinched with a slightly different $\e_0$ which is uniform in $i\to+\infty$ and $t>0$. Moreover, it follows from the proof of \cite[Theorem 7]{Yok} that $\mathrm{AVR}(g_i(t))=\mathrm{AVR}(g_0)$ for all $i\in \mathbb{N}$ and $t\geq 0$. We now follow the proof of Theorem~\ref{thm:main-1} to obtain a smooth manifold $M_\infty$ and an immortal Ricci flow $g_\infty(t),t\in (0,+\infty)$ on $M_\infty$ which is coming out of $C(X)$ in the pointed Gromov-Hausdorff sense. Moreover $\mathrm{AVR}(g_\infty(t))=\mathrm{AVR}(g_0)$ for $t>0$ using Colding's volume continuity \cite{ColdingVol}, see the proof of \cite[Theorem 1.2]{SchulzeSimon2013}. It follows from Theorem~\ref{thm:Rie-case} that $(N,g,\nabla f)=\left(M_\infty,g_\infty(1),\nabla u(1)\right)$ is an expanding gradient Ricci soliton  which is also PIC1 pinched.


Since $(N,g,\nabla f)$ is Ricci pinched, the scalar curvature decays exponentially at spatial infinity (see for example the proof of \cite[Proposition 1.8]{Deruelle2017}). Thanks to $\mathrm{Rm}(g)\in \mathrm{C}_{PIC1}$, by \cite[Lemma A.2]{LeeToppingPIC1} we can improve this to the full curvature tensor decaying exponentially at spatial infinity, and thus $(N,g)$ is isometric to flat Euclidean space by \cite[Theorem 1.3 \& 1.4]{ChanLee2023}. In particular, $\mathrm{AVR}(N,g)=\mathrm{AVR}(M,g_0)=1$. The flatness of $(M,g_0)$ follows from volume comparison. 
\end{proof}

\medskip

In the K\"ahler case, it is also interesting to see how the complex structure and metric structure are related. We have the following partial answer under the curvature assumption $\mathrm{OB},\Ric\geq 0$. This condition is weaker than non-negative $1$-isotropic curvature and is much more natural in the context of complex geometry.
\begin{thm}\label{thm:Kahler-OB-Ric} 
Suppose $M$ is a non-compact complex manifold and $g(t)$ is a complete non-compact \KR flow on $M\times (0,+\infty)$ such that for some $v_0,\a>0$, the following conditions hold for all $t>0$
\begin{enumerate}
    \item[(a)] $\Ric(g(t)),\mathrm{OB}(g(t))\geq 0$;
    \item[(b)] $|\Rm(g(t))|\leq \a t^{-1}$;
    \item[(c)] $\mathrm{AVR}(g(t))\ge v_0>0$.
\end{enumerate}
Suppose further that $(M,d_0,x_0)$ is isometric to a metric cone $\left( C(X),d_c,o\right)$, where $d_0$ is the well-defined metric on $M$ given by the pointwise limit of $d_{g(t)}$ as $t \downarrow 0$ (see Proposition \ref{prop:dist}). Then there exists a smooth function $u$ on $M\times (0,+\infty)$ such that 
\begin{enumerate}
    \item[(i)] $t\,\Ric(\omega(t))+\omega(t)-\ddb u=0$ on $M\times (0,+\infty)$;
     \item[(ii)] $u(\cdot,t)\to \frac12 d_0(\cdot,x_0)^2$ in $C^\b_{loc}(M)$ for some $\b\in (0,1)$ as $t\downarrow 0$;
    \item[(iii)] $\omega(t)-\frac12  \ddb d_0(\cdot,x_0)^2 \to 0$ in the sense of currents as $t\downarrow 0$.
\end{enumerate}
In particular, $\omega_\infty=\frac12  \ddb d_0(\cdot,o)^2$ defines a weak \K metric on $M_\infty$. Furthermore, if condition (a) is strengthened to non-negative $1$-isotropic curvature, then $M^n$ is biholomorphic to $\mathbb{C}^n$ and $g(t)$ is an expanding \KR soliton. 
\end{thm}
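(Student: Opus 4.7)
The plan is to mirror the proof of Theorem \ref{thm:Rie-case} using the K\"ahler obstruction $(1,1)$-form from Section 4. First I would construct the global smooth $u$ via the heat flow of Section 5, after adjusting the normalization of the initial data $u_{i,0}$ in Lemma \ref{lma:appro-initial} by a factor of $2$ so that $u_{i,0} \approx \tfrac{1}{2}d_{g(t_i)}^2(x_0,\cdot)$ and $\tr_{\omega(t_i)}\ddb u_{i,0} = n$ with $n = \dim_{\mathbb{C}} M$; this is simply a rescaling of the Cheeger-Jiang-Naber output, and the Hessian $L^2$ control in (iii) of Lemma \ref{lma:appro-initial} translates verbatim to an $L^2$ bound on the $(1,1)$-form $\ddb u_{i,0} - \omega(t_i)$. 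The resulting smooth function $u = v_\infty$ satisfies the heat equation on $M\times(0,\infty)$ and tends to $\tfrac{1}{2}d_0^2(\cdot, x_0)$ in $C^\b_{loc}(M)$ as $t \downarrow 0$, giving (ii).

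To prove the soliton identity (i), I would consider $\alpha_i(t) := -t\Ric(\omega_i(t)) - \omega_i(t) + \ddb v_i(t)$ and its extremal eigenvalues $\varphi_\pm$ from \eqref{defn:min-max-K}. By Lemma \ref{lma:evo-S-Kahler}, these satisfy $\heat \varphi_\pm \leq \mathcal{R}\varphi_\pm$ in the barrier sense whenever $\varphi_\pm \geq 0$. Building a comparison function $\psi_i$ from the conjugate heat kernel applied to $\varphi_i(\cdot,t_i)$, which is small in $L^2$ by the K\"ahler analog of Lemma \ref{lma:appro-initial}(iii), and setting $\Psi_i := \varphi_i - \psi_i$, the parabolic inequality, together with the local maximum principle Theorem \ref{thm:Local-MP} applied on parabolic rescalings, yields a decay estimate for $\Psi_i$. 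Combined with the conjugate heat-kernel Gaussian estimate of Theorem \ref{thm:Ricciflow-PIC1} for $\psi_i$, sending $i \to \infty$ and then $r \to \infty$ as in the proof of Theorem \ref{thm:Rie-case} forces $\varphi_\pm \equiv 0$, hence $\alpha \equiv 0$, which is (i).

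For (iii), I would use (i) to decompose
\begin{equation*}
    \omega(t) - \tfrac{1}{2}\ddb d_0^2(\cdot,x_0) = \ddb\bigl(u(\cdot,t) - \tfrac{1}{2}d_0^2(\cdot,x_0)\bigr) - t\,\Ric(\omega(t)).
\end{equation*}
The first term converges to $0$ as a current by (ii) and the uniform convergence of $u(\cdot,t)$ on compact sets. For the second, locally $\Ric(\omega) = -\ddb \log\det g$, so integration by parts against a smooth compactly supported test $(n-1,n-1)$-form $\eta$ gives $t\int \Ric\wedge\eta = -\int (t\log\det g)\,\ddb\eta$. The evolution $\partial_t \log\det g = -\mathcal{R}$ together with $|\mathcal{R}| \leq n\a/t$ yields $|\log\det g(t)| \leq |\log\det g(1)| + n\a|\log t|$ on compact sets, hence $t\log\det g \to 0$ uniformly and (iii) follows.

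Under the strengthened PIC1 hypothesis, (i)--(iii) still apply since PIC1 implies $\mathrm{OB}, \Ric \geq 0$, so $g(t)$ is an expanding \KR soliton by Remark \ref{rmk ot}. Combining preservation of PIC1 under the flow with Euclidean volume growth and the strong maximum principle yields the positivity needed to invoke the theorem of Chau-Tam \cite{ChauTamMRL}, which produces the biholomorphism $M \cong \mathbb{C}^n$. The principal technical obstacle I expect is part (iii): upgrading the merely pointwise bound $|\Ric|_{\omega(t)} \leq \a/t$ to current-level vanishing $t\Ric \to 0$, for which the Chern-Weil representation combined with the sharp $|\log t|$-growth of $\log\det g$ is the essential mechanism; a secondary issue is carefully tracking the factor-of-two normalization when adapting Section 5 from the Riemannian to the K\"ahler setting.
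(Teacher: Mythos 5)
Your proposal for parts (i)--(iii) matches the paper's proof essentially step by step. For (ii), the paper simply cites Proposition \ref{prop:vinf} and Lemma \ref{lma:rough-bdd}; you are right that the K\"ahler trace convention $\Delta_g f=\tr_\omega\ddb f$ forces the factor-of-two renormalisation $u=2v_\infty$ so that $\tr_\omega(\ddb u-\omega)\to 0$, a point the paper leaves implicit. For (i), the paper runs the argument of Theorem \ref{thm:Rie-case} verbatim with Lemma \ref{lma:evo-S-Kahler} in place of Lemma \ref{lma:evo-S2}, exactly as you describe. For (iii), the paper uses precisely your mechanism: it writes $t\,\Ric(\omega(t))=t\,\Ric(\omega(1))+t\,\ddb\log\left(\det g(1)/\det g(t)\right)$ and uses $0\le \mathcal{R}\le \a t^{-1}$ together with $\partial_t\log\det g=-\mathcal{R}$ to get $0\le t\log\left(\det g(t)/\det g(1)\right)\le \a\, t\log t^{-1}\to 0$; your local integration by parts against test $(n-1,n-1)$-forms is the same computation phrased in a chart rather than against the globally defined ratio of determinants.

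The one step that does not go through as written is your final paragraph. Identity (i) is only the $(1,1)$-part of the soliton equation: $tR_{i\bar j}+g_{i\bar j}=u_{i\bar j}$ carries no information about the $(2,0)$-part of $\nabla^2u$, so by itself it does not make $\nabla u$ real holomorphic, does not give the gradient soliton equation, and does not put you in a position to apply Remark \ref{rmk ot}, which requires the full Riemannian identity $2t\,\Ric+g=2\nabla^2u$. Under the strengthened PIC1 hypothesis the correct route --- and the paper's --- is to invoke the Riemannian Theorem \ref{thm:Rie-case} directly, which supplies the full Hessian equation; since the flow is K\"ahler with $\Ric\ge 0$, it is then an expanding \KR soliton and \cite{ChauTamMRL} applies. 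Note also that the Chau--Tam uniformization result only needs $\Ric\ge 0$, so the strong-maximum-principle step you anticipate to upgrade to strict positivity is unnecessary.
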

\begin{proof}
The existence of a function $u$ with property (ii) follows from Proposition~\ref{prop:vinf} and Lemma~\ref{lma:rough-bdd}. The proof of (i) is identical to that of (i) in Theorem~\ref{thm:Rie-case} by using Lemma~\ref{lma:evo-S-Kahler} instead of Lemma~\ref{lma:evo-S2}.

It remains to verify (iii). The existence of $\lim_{t\to 0}\omega(t)$ as a current has been shown by Lott \cite{LottDuke}. In order to identify it, we will use an idea from \cite[Proposition 6.1]{LottDuke}. By (i) and (ii), it suffices to show that $t\cdot \Ric(\omega(t))\to 0$ as $t\downarrow 0$ in the sense of currents. 

We might compare $\Ric(\omega(t))$ as $t\downarrow 0$ with $\Ric(\omega(1))$ in the following way:
\begin{equation}
  t\cdot   \Ric(\omega(t))=t\cdot \Ric(\omega(1))+t\ddb \log \frac{\det g(1)}{\det g},
\end{equation}
where $\omega(1)$ is of bounded curvature on $M$. Since $0\leq \mathcal{R}(g(t))\leq \a t^{-1}$ for some $\a>0$, the Ricci flow equation implies 
\begin{equation}
    0\leq t \log \left(\frac{\det g(t)}{\det g(1)}\right)\leq t\log t^{-\a}\to 0
\end{equation}
as $t\downarrow 0$.  Thus $\omega_\infty:=\ddb \frac12 d_0^2(x_0,\cdot)$ defines a weak \K metric on $M$.

Finally if the condition (a) is strengthened to non-negative $1$-isotropic curvature, it follows from Theorem~\ref{thm:Rie-case} that $g(t)$ is \K and an expanding Ricci soliton with $\Ric\geq 0$. Therefore, it is an expanding \KR soliton. The biholomorphism follows from \cite{ChauTamMRL}.
\end{proof}

Corollary~\ref{cor:kahler} follows from the \K case in Theorem~\ref{thm:Ricciflow-PIC1}, Cheeger-Colding theory \cite{CheegerColding} and Theorem~\ref{thm:Kahler-OB-Ric}. In the compact case, it is already known by the work \cite{ChenX,wilking} that compact \K manifold with $\mathrm{OB}>0$ are biholomorphic to $\mathbb{CP}^n$. It will be interesting to see if $M$ in Corollary~\ref{cor:kahler} is biholomorphic to $\mathbb{C}^n$ under only $\mathrm{OB},\Ric\geq 0$ and Euclidean volume growth.

\end{document}